%
%
%







%
%
%


\documentclass{amsart}




\usepackage[cmtip,all]{xy}
\usepackage{amsmath, amsthm, latexsym, amssymb, amsfonts}
\usepackage{alltt}
\usepackage[pdftex]{graphicx,color} 
\usepackage{algorithm}

\usepackage{algpseudocode}
\usepackage{longtable}
\usepackage[table]{xcolor}
\usepackage{hyperref}
\usepackage{cleveref}

\newcommand{\supp}{\operatorname{supp}}

\newcommand{\dis}{\displaystyle}

\newcommand{\Hom}{\operatorname{Hom}}

\newcommand{\Ker}{\operatorname{Ker}} 
\newcommand{\im}{\operatorname{Im}}

\newcommand{\Cl}{\operatorname{Cl}}

\newcommand{\sig}{\sigma}
\newcommand{\Sig}{\Sigma}

\newcommand{\red}{\operatorname{red}}

\newcommand{\A}{\mathbb A}
\newcommand{\Q}{\mathbb Q}
\newcommand{\Z}{\mathbb Z}
\newcommand{\N}{\mathbb N}

\newcommand{\F}{\mathbb F}
\newcommand{\Pp}{\mathbb P}
\newcommand{\K}{{\mathbb K}}

\newcommand{\cC}{\cl C}

\newcommand{\cl}[1]{\mathcal{#1}}

\newcommand{\la}{\langle}
\newcommand{\ra}{\rangle}
\def\aa{{\bf \alpha}}
\def\bb{\beta}
\def\a{{\bf a}}
\def\t{{\bf t}}
\def\q{{\bf q}}

\def\uu{{\bf u}}
\def\vv{{\bf v}}
\def\x{{\bf x}}
\def\m{{\bf m}}

\def\kay{{\chi}}

\def\e{{\bf e}}
\def\ev{{\text{ev}}}

\newcommand{\vep}{\varepsilon}

\newtheorem{theorem}{Theorem}[section]
\newtheorem{lemma}[theorem]{Lemma}
\newtheorem{corollary}[theorem]{Corollary}

\newtheorem{proposition}[theorem]{Proposition}
\newtheorem{definition}[theorem]{Definition}

\newtheorem{example}[theorem]{Example}
\newtheorem{remark}[theorem]{Remark}

\numberwithin{equation}{section}

\begin{document}

\title[Vanishing Ideals for Toric Codes]{Computing Vanishing Ideals for Toric Codes}


\author[Mesut \c{S}ah\.{i}n]{Mesut \c{S}ah\.{i}n}
\address{ Department of Mathematics,
  Hacettepe  University,
  Ankara, TURKEY}
\curraddr{}
\email{mesut.sahin@hacettepe.edu.tr}
\thanks{The author is supported by T\"{U}B\.{I}TAK Project No:119F177}


\subjclass[2020]{Primary 14M25; 14G05
; Secondary 94B27
; 11T71}

\date{}


\begin{abstract}
Motivated by applications to the theory of error-correcting codes, we give methods for computing a generating set for the ideal generated by $\beta$-graded polynomials vanishing on certain subsets of a simplicial complete toric variety $X$ over a finite field $\mathbb{F}_q$, where $\beta$ is a $d\times r$ matrix whose columns generate a subsemigroup $\mathbb{N}\beta$ of $\mathbb{N}^d$. We also give a method for computing the vanishing ideal of the set of $\mathbb{F}_q$-rational points of $X$. When $\beta=[w_1 \cdots w_r]$ is a row matrix corresponding to a numerical semigroup $\mathbb{N}\beta=\langle w_1,\dots,w_r \rangle$, $X$ is a weighted projective space and generators of the relevant vanishing ideal is given using generators of defining (toric) ideals of numerical semigroup rings corresponding to semigroups generated by subsets of $\{w_1,\dots,w_r\}$.
\end{abstract}

\maketitle

\section{Introduction}
Let $\beta=[\bb_1 \cdots \bb_r]$ be a $d\times r$ matrix of rank $d$ with non-negative integer entries and $n=r-d>0$. The polynomial ring $S=\F[x_1,\dots,x_r]$ over a field $\F$ is made into a $\Z^d$-graded ring by letting $\deg_{\beta}(x_j):=\beta_j\in \N^d$, for $j\in [r]:=\{1,\dots,r\}$. Thus, $S=\bigoplus_{\aa \in \Z^d} S_{\aa}$, where $S_{\aa}$ is the \textit{finite-dimensional} vector space spanned by the monomials $\x^{\a}:=x_1^{a_1}\cdots x_r^{a_r}$ having degree $\aa=a_1\beta_1+\cdots+a_r\beta_r$ in the affine semigroup $\N\bb$ by \cite[Theorem 8.6]{CombComAlgBook}. This leads to the following short exact sequence 
\begin{equation} \label{e:ses1}
 \dis \xymatrix{ 0  \ar[r] & \Z^n \ar[r]^{\phi} & \Z^r \ar[r]^{{\bb}} & \Z^d \ar[r]& 0},
\end{equation}
where $\phi$  denotes a matrix such that $\im (\phi) = \Ker (\bb)$. Applying $\Hom(-,\K^*)$ for an algebraically closed field $\K$, we get the dual short exact sequence

\begin{equation} \label{e:ses2}
\dis \xymatrix{ 1  \ar[r] & (\K^*)^d \ar[r]^{i} & (\K^*)^r \ar[r]^{\pi} & (\K^*)^n \ar[r]& 1},
\end{equation}
where $\pi:(t_1,\dots, t_r)\mapsto (\mathbf{t}^{\uu_1}, \dots , \mathbf{t}^{\uu_n}),$ with $\uu_1,\dots, \uu_n$ being the columns of $\phi$. Denote by $G=\Ker(\pi)\cong (\K^*)^d$. Then, $G$ is an algebraic subgroup of $(\K^*)^r$ acting on the affine space $\A^r$ over $\K$ by coordinate-wise multiplication. We denote by $\A^r_{G}$ the set $\K^r/G$ of $G$-orbits. More generally, $Y_G$ denotes the set $Y/G$ of $G$-orbits of elements in $Y\subseteq \A^r$. In general, $\A^r_{G}$ is not necessarily a variety, but Geometric Invariant Theory (GIT, for short) says removing some \textit{bad} orbits we can get nice quotient spaces which are varieties. Toric varieties are such important nice quotient spaces lying at the crossroad of combinatorics, commutative algebra and algebraic geometry with numerous applications to areas such as biology, chemistry, coding theory, physics and statistics.

The algebraic set up above arise often within toric geometry which we briefly explain now. When $X$ is an $n$-dimensional simplicial complete toric variety over a field, the first map in equation (\ref{e:ses1}) is just multiplication by the matrix $\phi$ whose rows are the primitive generators $\vv_1,\dots,\vv_r\in \Z^n$ of the rays in the corresponding fan. Under suitable conditions, satisfied by smooth varieties for instance, the variety $X$ can be represented as a GIT quotient, i.e. $X\cong (\K^r \setminus V(B))/G$, where $B$ is a monomial ideal of $S$ determined by the cones in the fan, see Section \ref{s:toricVariety} for details. 

In applications to coding theory, we work with a finite field $\F=\F_q$ together with an algebraic closure $\K=\overline{\F}_q$ and identify $\F_q$-rational points $\A^r_G(\F_q)$ of $\A^r_G$ with $\F_q^r/G$,  where $G=\{ \t \in (\overline{\F}^*_q)^r : \t^{\uu_1}= \cdots =\t^{\uu_n}=1\}$ is the algebraic group determined in equation (\ref{e:ses2}) for $\K$. Therefore, $\F_q$-rational points $X(\F_q)$ of $X$ is identified with the set of orbits $(\F_q^r \setminus V(B))/G=\A^r_G(\F_q) \setminus V_G(B)$.

Toric codes, considered for the first time by Hansen \cite{HansenAAECC}, can be obtained by evaluating all homogeneous polynomials in the space $S_{\alpha}$ at only the $\F_q$-rational points of the dense torus $T_X \subset X$. They are studied intensively from different points of view, see \cite{JoynerAAECC,LS06SIAMJDM,Ruano07FFA,SoSo08SIAMJDM,Ruano09JSC,SoSo10SIAMJDM,JA11FFA,BK13JSC,LittleFFA2013,Sop13JSC,CelSop15FFA,MultHFuncToricCodes16}. A row of a \textit{generator matrix} of the code is obtained by evaluating a monomial in a basis of $S_{\alpha}$ at the $\F_q$-rational points so that the code is the row space of the matrix, for sufficiently large $q$. Some record breaking examples are found replacing the vector space $S_{\alpha}$ by its subspaces, see \cite{7championCodes} and references therein. The latter corresponds to deleting rows from a generating matrix of the toric code, which is investigated by Little \cite{ToricsFiniteGeom} using the theory of finite geometries. See also Hirschfeld \cite{GlynnHirschfeld} for another example relating finite geometry and vanishing ideals.

One can also add/delete columns to/from a generating matrix in order to get a better code, which correspond to considering a proper subset/superset of $T_X$. In this regard, Nardi offered to extend the length of a toric code by evaluating at the full set of $\F_q$-rational points $X(\F_q)$ in \cite{NardiHirzebruch} and \cite{NardiProjToric}. There is yet another extension of classical toric codes, which we introduce now. As in the toric case, we evaluate polynomial functions from $S_{\aa}:=\F_q[x_1,\dots,x_r]_{\aa}$ at the $\F_q$-rational points $[P_1],\dots,[P_N]$ of a subset $Y_G \subseteq \A^r_G(\F_q)$, defining the following $\F_q$-linear map $$\ev_{Y_G}:S_\aa\to \F_q^N,\quad F\mapsto (F(P_1),\dots,F(P_N)).$$  
The image $\text{ev}_{Y_G}(S_\aa) \subseteq \F_q^N$ denoted by ${\cC}_{\aa,Y_G}$ is called an \textbf{evaluation code on orbits}. The main three parameters $[N,K,\delta]$ of these codes are the \textit{length} $N$ of ${\cC}_{\aa,Y_G}$ which is the size $|Y_G|$, the \textit{dimension} $K=\dim_{\F_q}({\cC}_{\aa,Y_G})$ of the image as a subspace of $\F_q^N$, and the \textit{minimum distance} $\delta$ which is the smallest \textit{weight} among all code words $c\in{{\cC}_{\aa,Y_G}}\setminus\{0\}$, where the weight of $c$ is the number of non-zero components. Since the kernel of the map $\ev_{Y_G}$ is nothing but $I_{\aa}(Y_G):=I(Y_G)\cap S_{\aa}$, the code ${\cC}_{\aa,Y_G}$ is isomorphic to $S_{\aa}/I_{\aa}(Y_G)=(S/I(Y_G))_{\aa}$. Hence, computing a minimal generating set for the vanishing ideal $I(Y_G)$ is of central importance. When $X\subset Y_G \subseteq \A^r_G(\F_q)$, the new codes are lengthier and one has the chance to choose the subset $Y_G$ so that the other parameters improves as well, see Example \ref{eg:goodcode}. As pointed out in \cite{NardiProjToric}, as the length increases one can build secret sharing schemes based on these codes with more participants, see \cite{HansenSSSWSM}.

In the present paper, we start by observing that $I(\A^r_G(\F_q))$ has a minimal generating set consisting of binomials. We also give a conceptual method to list binomial generators for $I(\A^r_G(\F_q))$ using the cell decomposition of the affine space $\A^r$, see Theorem \ref{t:idealAffine}. The vanishing ideal of the $\F_q$-rational points of the toric variety $X$ can be obtained as a colon ideal of $I(\A^r_G(\F_q))$ with respect to the monomial ideal $B$, see Theorem \ref{t:saturation}. As applications, we give three binomials generating $I(\A^4_G(\F_q))$ and thereby obtain a binomial and a polynomial with $4$ terms generating $I(X(\F_q))$ minimally, where $X=\cl H_{\ell}$ with $\ell>1$ is the Hirzebruch surface, see Theorem \ref{t:I(A4)Hirzebruch} and Theorem \ref{t:IdealHirzebruch}, revealing that $X(\F_q)$ is an ideal theoretic complete intersection. It is known that $I(Y_G)$ is a binomial ideal when $Y_G$ is a submonoid of $\A^r_G$, \cite[Proposition 2.6]{sahin18} whereas $I(X(\F_q))$ can still be binomial even if $X$ is not a monoid, see Theorem \ref{t:IdealWPS}. The last theorem generalizes to some weighted projective spaces the fact that the ideal $I(\Pp^n(\F_q))$ has binomial generators given explicitly by Mercier and Rolland
\cite{MRidealOfPn}. It is worth pointing out that these binomials form a Groebner basis as shown by Beelen, Datta and Ghorpade \cite{BDGidealOfPn} which is used to obtain a footprint bound for the minimum distance of the corresponding code. Binomial ideals appear as vanishing ideals in many works, see e.g. \cite{vPV13CA,NvP14JPAA,NevesJAA20,NvPV20JA,BaranSahin} and prove useful in studying basic parameters of the related codes. As a last application, see Theorem \ref{t:dimension}, we use binomiality of the vanishing ideal $I(\A^r_G)$ to give another proof for a very useful combinatorial method established for the first time by Nardi in \cite{NardiHirzebruch,NardiProjToric} to compute dimension of a code obtained on $X(\F_q)$.

\section{Binomial Vanishing Ideals}
In this section, we list some basic cases where the \textit{homogeneous} or \textit{multigraded} vanishing ideal $I(Y_G)$ of a subset $Y_G$ is generated by binomials. Recall that the set of all polynomials vanishing on the subset $Y$ is an ideal called the vanishing ideal of $Y$ which differs from the $\bb$-graded vanishing ideal $I(Y_G)$ of $Y_G:=Y/G$ that is generated by \textit{homogeneous} (or $\bb$-graded) polynomials vanishing on $Y$. 

Binomial ideals play a central role at the crossroad of combinatorics, commutative algebra, convex and algebraic geometry, see the recent book \cite{HHObookBinomialIdeals} by Herzog, Hibi and Ohsugi for a through introduction to their theory and applications. It is an emerging hot topic relating as diverse areas as commutative algebra, graph theory, coding theory  and statistics. They have many interesting properties discovered starting from the seminal work \cite{BinomialIdeals} by Eisenbud and Sturmfels, and their decompositions are studied further by other authors, see e.g.   \cite{OjedaCellularBinomials,EserMatusevichJLMS}. There is a \verb|Macaulay 2| package \cite{Kahle12DecBinMac2} for their binomial primary decomposition as well.

Notice that if $F\in S_{\aa}$ then we have 
\begin{equation}\label{e:indepenceOfTheIdealFromG}
F(g\cdot P)=g^{\aa}F(P)=0 \text{ if and only if } F(P)=0, \text{ for any } g\in G.
\end{equation}

\begin{remark} \label{r:indepenceOfTheIdealFromK} Recall that $G$ is defined over the field $\K=\overline{\F}_q$ in applications to coding theory where we also take $\F=\F_q$. But the vanishing of a polynomial at a point $[P]$ is independent of the group $G$ by Equation \ref{e:indepenceOfTheIdealFromG}.  Therefore,  the homogeneous generators for the vanishing ideal $I(Y_G)\subseteq \F[x_1,\dots,x_r]$ would be the same even if $\K= \F_q$.
\end{remark}

\begin{remark} When $Y_G$ is the subgroup $Y_Q$ of the torus $(\F^*_q)^r/G$ parameterized by a matrix $Q=[\q_1 \cdots \q_r]$ its vanishing ideal is proven in \cite{AlgebraicMethods2011,sahin18} to be some special \textbf{binomial ideal} known as a \textbf{lattice ideal}, i.e., it is of the form 
\[I_L:=\langle \x^{\m^{+}}- \x^{\m^{-}} \, : \m=\m^{+}-\m^{-} \in L \, \rangle
\]
for a lattice (finitely generated abelian group) $L$, where $\m^{+}$ and $\m^{-}$ record positive and negative components of $\m$.
\end{remark}

Clearly, $\A^r_{G}(\F_q)$ is a monoid under coordinatewise multiplication with identity element $(1,\dots,1)$. The vanishing ideal of a submonoid is known to be binomial:
\begin{proposition} \label{prop:binomial} \cite[Proposition 2.6]{sahin18} If $Y_G$ is a submonoid of $\A^r_{G}(\F_q)$, then $I(Y_G)$ is binomial. 
\end{proposition}

\begin{corollary} The ideal $I(\A^r_{G}(\F_q))$ is binomial.
\end{corollary}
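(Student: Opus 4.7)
The plan is to apply Theorem \ref{t:RatPar} to the trivial ``identity'' parameterization of $\F_q^r$. Take $s=r$, $f_j=y_j$, and $g_j=1$ for every $j\in[r]$. Then
$$Y(f,g)=\{(t_1,\dots,t_r):\t\in\F_q^r\}=\F_q^r,$$
so $Y_G(f,g)=\A^r_G(\F_q)$. Invoking Theorem \ref{t:RatPar} together with Remark \ref{r:g_j=1} yields
$$I(\A^r_G(\F_q))=J\cap S,\qquad J=\langle\,x_j-y_j\z^{\beta_j}\,:\,j\in[r]\,\rangle+\langle\,y_i^q-y_i\,:\,i\in[r]\,\rangle.$$
Observe that every generator of $J$ is a binomial.

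The next step is to pass from ``$J$ is generated by binomials'' to ``$J\cap S$ is generated by binomials.'' For this I would invoke the standard fact, due to Eisenbud and Sturmfels \cite{BinomialIdeals}, that the reduced Gröbner basis of a binomial ideal with respect to any monomial order again consists of binomials; this is visible from Buchberger's algorithm, since the $S$-polynomial of two binomials is a binomial, and reduction of a binomial modulo a binomial yields a binomial.

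With this in hand, choose a monomial order on $R=\F_q[x_1,\dots,x_r,y_1,\dots,y_r,z_1,\dots,z_d]$ that eliminates the variables $y_1,\dots,y_r,z_1,\dots,z_d$ (e.g., a lexicographic order with $y_i,z_k>x_j$). By the elimination theorem, $J\cap S$ is generated precisely by the elements of a Gröbner basis of $J$ that lie in $S$. Since such a basis can be taken to consist of binomials, $J\cap S=I(\A^r_G(\F_q))$ is a binomial ideal.

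No serious obstacle is anticipated: the entire argument is reducing to Theorem \ref{t:RatPar} and then quoting the binomial-Gröbner-basis theorem. The only mild subtlety is making sure that the identity map $\F_q^r\to\F_q^r$ is a legitimate ``rational parameterization'' in the sense of the theorem, which is handled by Remark \ref{r:g_j=1} allowing us to drop the auxiliary variable $w$.
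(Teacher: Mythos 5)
Your proposal is correct and follows essentially the same route as the paper: take $s=r$, $f_j=y_j$, $g_j=1$, apply Theorem \ref{t:RatPar} with Remark \ref{r:g_j=1}, and conclude that the elimination ideal of a binomial ideal is binomial. You simply make explicit the Eisenbud--Sturmfels Gr\"obner-basis justification for that last step, which the paper leaves implicit.
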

\begin{proof} The proof follows from \Cref{prop:binomial} by taking $Y_G=\A^r_{G}(\F_q)$.
\end{proof}

\section{Cellular Binomial Ideals for Orbits}

In this section, we see that the vanishing ideals of points and of orbits are special binomial ideals. Throughout the section, we assume that both fields $\F=\K=\F_q$ in the virtue of Remark \ref{r:indepenceOfTheIdealFromK}. Let us start by explaining what we mean from special in this regard: 

\begin{definition}\cite[Definition 2.2]{EserMatusevichJLMS} An ideal $J \subseteq \F[x_1,\dots,x_r]$ is cellular if every variable $x_j$ is either a nonzerodivisor or nilpotent modulo $J$. If $J$ is a cellular binomial ideal, and $\emptyset \neq \varepsilon \subseteq [r]$ indexes the variables that are nonzerodivisor modulo $J$, then $J$ is called $\varepsilon$-cellular.
\end{definition} 

\begin{definition}
Let $S=\F[x_1,\dots,x_r]$ be a polynomial ring and $\emptyset \neq \varepsilon \subseteq [r]$. $S[\varepsilon]$ denotes the ring $\F[x_i : i \in \varepsilon]$ and we define $\mathfrak{m} (\check{\varepsilon}):=\langle x_i : i\notin \varepsilon \rangle \subseteq S$.
\end{definition}

\begin{definition}
The support $\varepsilon_p$ of a point $P\in \A^r$, is the set of indices $i\in [r]$ for which the $i-th$ component $p_i$ of $P$ is not zero. So, $\A^r$ is the disjoint union of its subsets $\A^r(\varepsilon) $ consisting of the points supported at $\varepsilon\subseteq [r]$. Notice that $\A^r(\emptyset)=\{(0,\dots,0)\}$ and $\A^r([r])=(\K^*)^r $.
\end{definition}

We consider the projection $\pi_{\varepsilon}:\A^r\rightarrow \A^{|\varepsilon|}$ where $\pi_{\varepsilon}(x_1,\dots,x_r)=(x_{i_1},\dots,x_{i_k})$ for any subset  $\varepsilon=\{i_1,\dots,i_k\}\subseteq [r]$. By abusing the notation, we use the same notation for the homomorphism $\pi_{\varepsilon}:\Z^r\rightarrow\Z^{|\varepsilon|}$.  \\

We distinguish $L_{\bb}({\varepsilon})=\{(m_1,\dots,m_r)\in L_{\beta}: m_i=0, \forall i \notin \varepsilon\}$  with its image $\pi_{\varepsilon}(L_{\bb}({\varepsilon}))$ under $\pi_{\varepsilon}:\Z^r\rightarrow\Z^{\varepsilon}$. Note that $$(m_1,\dots,m_r)\in L_{\beta} \iff m_1\beta_1+\dots+m_r\beta_r=0.$$ 
Thus, $$\m\in L_{\bb}({\varepsilon}) \iff \sum\limits_{i\in \varepsilon}m_i\beta_i=0\iff \pi_{\varepsilon}(\m)\in L_{\bb(\varepsilon)}:=\mbox{ Ker }(\beta({\varepsilon})),$$
where $\beta({\varepsilon})$ is the matrix with columns $\bb_j$ for $j\in \varepsilon$. Thus, $\pi_{\varepsilon}(L_{\bb}({\varepsilon}))=L_{\bb(\varepsilon)}$. \\

Recall that $\kay_{p} : L_{\bb(\varepsilon)} \rightarrow \K^*$ is defined by $\kay_p(\m)=\x^{\m}(P)$, and the ideal $I_{\kay_{p},L_{\bb(\varepsilon)}}$ is generated by binomials of the form $\x^{\m^{+}}-\x^{\m}(P)\x^{\m^{-}}$ for $\m\in L_{\bb(\varepsilon)}$. 

Our first $\varepsilon$-cellular binomial ideals appears here:
\begin{proposition}\label{p:IdealPoint}
With the notations above and $[P]:=G\cdot P$, we have the following
\begin{enumerate}
\item $I([1_{\varepsilon}])=\mathfrak{m} (\check{\varepsilon}) +S\cdot I_{L_{\bb(\varepsilon)}}$, where $1_{\varepsilon}\in \A^r(\varepsilon)$ is the point whose image $\pi_{\varepsilon}(1_{\varepsilon})=(1,\dots,1)\in \A^{|\varepsilon|}$.
\item $I([P])=\mathfrak{m} (\check{\varepsilon})+S\cdot I_{\kay_p,L_{\bb(\varepsilon)}}$, where $\varepsilon$ is the support of $P\in \A^r$ and $I_{\kay_{p},L_{\bb(\varepsilon)}}$ is the lattice ideal of the partial character $\kay_{p}$.
\end{enumerate}
\end{proposition}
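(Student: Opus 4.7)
The plan is to prove both parts by showing each inclusion explicitly, with part (2) generalizing part (1). The central observation is that by Equation (\ref{e:indepenceOfTheIdealFromG}), a $\beta$-homogeneous polynomial $F$ lies in $I([P])$ if and only if $F(P)=0$; this is what lets us test orbit vanishing at a single representative.

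For the inclusion $\supseteq$ in both parts, I would check the generators directly. Each $x_i$ with $i\notin\varepsilon$ is $\beta$-homogeneous of degree $\beta_i$ and vanishes at $1_\varepsilon$ (resp.\ at $P$) since that coordinate is zero, so $\mathfrak{m}(\check\varepsilon)\subseteq I([1_\varepsilon])$ (resp.\ $I([P])$). For a generator $\x^{\m^+}-\kay_p(\m)\x^{\m^-}$ of $I_{\kay_p,L_{\bb(\varepsilon)}}$ (taking $\kay_p\equiv 1$ for part (1)), the two terms have equal $\beta$-degree because $\m\in L_{\bb(\varepsilon)}$ and $\m^\pm$ are supported in $\varepsilon$, so the binomial is $\beta$-homogeneous; it vanishes at $P$ by the very definition $\kay_p(\m)=\x^\m(P)=P^{\m^+}/P^{\m^-}$, where the ratio makes sense because $P_i\neq 0$ for $i\in\varepsilon$.

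For the inclusion $\subseteq$, I would take a $\beta$-homogeneous $F\in I([P])$ of some degree $\aa$. Reducing modulo $\mathfrak{m}(\check\varepsilon)$, I may assume every monomial of $F=\sum_{\m} c_\m \x^\m$ is supported in $\varepsilon$; then any two of these monomials have exponent vectors differing by an element of $L_{\bb(\varepsilon)}$. Fix one exponent $\m_0$ appearing in $F$ and write, for any other appearing $\m$, $\mathbf{n}:=\m-\m_0=\mathbf{n}^+-\mathbf{n}^-\in L_{\bb(\varepsilon)}$. The key technical step is the standard lattice-ideal manipulation: multiply the generator $\x^{\mathbf{n}^+}-\kay_p(\mathbf{n})\x^{\mathbf{n}^-}$ by $\x^{\m_0-\mathbf{n}^-}$, which has non-negative exponents because $\m\geq 0$ and $\m^+,\m^-$ have disjoint supports force $\m_{0,i}\geq \mathbf{n}^-_i$; this yields
\[
\x^{\m}-\kay_p(\m-\m_0)\,\x^{\m_0}\in S\cdot I_{\kay_p,L_{\bb(\varepsilon)}}.
\]
Then I write
\[
F=\sum_{\m} c_\m\bigl(\x^\m-\kay_p(\m-\m_0)\x^{\m_0}\bigr)+\Bigl(\sum_{\m}c_\m \kay_p(\m-\m_0)\Bigr)\x^{\m_0},
\]
and observe that the coefficient of $\x^{\m_0}$ equals $P^{-\m_0}\sum_m c_\m P^\m=P^{-\m_0}F(P)=0$. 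Thus $F\in S\cdot I_{\kay_p,L_{\bb(\varepsilon)}}$, yielding the desired containment; part (1) is the special case $P=1_\varepsilon$, where $\kay_p\equiv 1$ reduces the coefficient sum to $\sum c_\m=F(1_\varepsilon)=0$.

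I do not expect a serious obstacle; the only place requiring care is the exponent-nonnegativity check when turning a lattice-ideal generator $\x^{\mathbf{n}^+}-\kay_p(\mathbf{n})\x^{\mathbf{n}^-}$ into the normalized binomial $\x^{\m}-\kay_p(\m-\m_0)\x^{\m_0}$, and the bookkeeping that the reduction modulo $\mathfrak{m}(\check\varepsilon)$ preserves $\beta$-homogeneity (which it does, since $\mathfrak{m}(\check\varepsilon)$ is itself $\beta$-graded). Everything else is a direct assembly of the two inclusions.
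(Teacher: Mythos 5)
Your proof is correct, but it takes a genuinely different route from the paper's. The paper handles the hard inclusion $\subseteq$ in part (1) by first invoking Theorem \ref{t:RatPar} (with $f_i\in\{0,1\}$, $g_i=1$) to conclude that $I([1_\varepsilon])$ is a \emph{binomial} ideal, so that it suffices to test homogeneous binomials $c_1\x^{\a_1}+c_2\x^{\a_2}$ not lying in $\mathfrak{m}(\check\varepsilon)$; evaluation at $1_\varepsilon$ forces $c_1+c_2=0$ and homogeneity forces $\a_1-\a_2\in L_{\bb(\varepsilon)}$. Part (2) is then reduced to part (1) by the rescaling $F'(x_{i_1},\dots,x_{i_k})=F(p_{i_1}x_{i_1},\dots,p_{i_k}x_{i_k})$, following \cite[Theorem 5.1]{sahin18}. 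You instead give a direct normal-form argument: for an arbitrary homogeneous $F$ supported in $\varepsilon$, you rewrite each monomial as $\kay_p(\m-\m_0)\x^{\m_0}$ modulo $S\cdot I_{\kay_p,L_{\bb(\varepsilon)}}$ and observe that the surviving coefficient is $P^{-\m_0}F(P)=0$. This is self-contained (it needs neither Theorem \ref{t:RatPar} nor the external rescaling lemma), treats both parts uniformly with part (1) as the special case $\kay_p\equiv 1$, and as a byproduct re-derives the binomiality of $I([P])$ rather than assuming it; the price is the explicit lattice-ideal bookkeeping, which you carry out correctly (the multiplier $\x^{\m_0-\mathbf{n}^-}$ has non-negative exponents since $\mathbf{n}^-_i=\max(0,m_{0,i}-m_i)\leq m_{0,i}$ because $m_i\geq 0$ --- your stated justification garbles this slightly by writing $\m^\pm$ for $\mathbf{n}^\pm$, but the claim is true). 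One small point worth making explicit: after discarding the monomials of $F$ lying in $\mathfrak{m}(\check\varepsilon)$, the truncated polynomial still vanishes at $P$ because the discarded monomials individually vanish there; this is needed before applying the coefficient computation.
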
 

\begin{proof}
\begin{enumerate}
    \item Clearly, $x_i$ vanishes at $1_{\varepsilon}$ when $i\notin\varepsilon$. So, $\mathfrak{m} (\check{\varepsilon})=\langle x_i : {i\notin\varepsilon}\rangle\subseteq I([1_{\varepsilon}])$. Obviously, the homogeneous binomial $\x^{\m^{+}}-\x^{\m^{-}}\in I_{L_{\bb(\varepsilon)}}\subset S[{\varepsilon}],$ vanishes at $1_{\varepsilon}$, as $1-1=0$. Therefore, $I_{L_{\bb(\varepsilon)}}\subseteq I([1_{\varepsilon}])$ proving the first containment. 
    
     Now, let $F \in I([1_{\varepsilon}])$ be a homogeneous generator with monomials not contained in $\mathfrak{m} (\check{\varepsilon})$. It follows from \Cref{prop:binomial} that $I([1_{\varepsilon}])\cap S[\varepsilon]$ is a binomial ideal. So, $F=c_1\x^{\a_1}+c_2\x^{\a_2}$. Thus, $c_1+c_2=F(1_{\varepsilon})=0$ implying $F=c_1(\x^{\a_1}-\x^{\a_2})$. As $F$ is a homogeneous polynomial supported at $\varepsilon$, we have $\a_1-\a_2\in L_{\bb(\varepsilon)}.$ Thus, $F\in I_{L_{\bb(\varepsilon)}}.$
    
    \item $\mathfrak{m} (\check{\varepsilon})=\langle x_i : {i\notin\varepsilon}\rangle\subseteq I([P])$ follows from the assumption that $\varepsilon$ is the support of $P\in \A^r$. Let $F\in I([P]) \setminus \mathfrak{m} (\check{\varepsilon})$. We proceed as in the proof of \cite[Theorem 5.1]{sahin18}. Then $F\in S[\varepsilon]$ and $F(P)=0 \iff F'(1_{\varepsilon})=0$, for $F'(x_{i_1},\dots,x_{i_k})=F(p_{i_1}x_{i_1},\dots,p_{i_k}x_{i_k})$ when $\varepsilon=\{i_1,\dots,i_k\}$. Since, the polynomial $F'\in I_{L_{\bb(\varepsilon)}}$ is an algebraic combination of binomials  $\x^{\m^{+}}-\x^{\m^{-}}$ for the elements $\m\in L_{\bb(\varepsilon)}$, it follows that $F\in I_{\kay_{p},L_{\bb(\varepsilon)}}$, as 
    $$(\x^{\m^{+}}-\x^{\m}(P)\x^{\m^{-}})(P)=0\iff (\x^{\m^{+}}-\x^{\m^{-}})(1_{\varepsilon})=0.$$
\end{enumerate}
These complete the proof.
\end{proof}
Let $T=\{(t_1,\dots,t_r)\in \A^r: t_1 \cdots t_r\neq0\}$ be the torus $(\K^*)^r$ of $\A^r$ and let $T_{G}$ denote the quotient group $T/ G$. Then $T_G$ acts on $\A^r_G$ via coordinate wise multiplication:
$$ \displaystyle
T_{G}\times \A^r_G  \rightarrow \A^r_G, \quad 
([\t],[P]) \rightarrow [\t P].
$$
It is easy to see that $\A^r_G(\varepsilon)=T_{G}\cdot [1_{\varepsilon}]\cong (\K^*)^{|\varepsilon|}$, since for every $P\in \A^r(\varepsilon)$, there is a unique $\t\in T$ with $P=\t \cdot 1_{\varepsilon}$, where $t_j=p_j$ when $j\in \varepsilon$ and $t_j=1$ when $j\notin \varepsilon$.

Next, we show that the vanishing ideals of orbits (of cells) are $\varepsilon$-cellular binomial.
\begin{theorem} \label{t:idealOrbit} With the notations above and $\K=\F_q$ we get the following result,
$$I(\A^r_G(\varepsilon))=I(T_{G}\cdot [1_{\varepsilon}])=\mathfrak{m} (\check{\varepsilon})+S\cdot I_{(q-1)L_{\bb(\varepsilon)}}.$$
\end{theorem}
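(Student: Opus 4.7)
The plan is to prove the two inclusions separately, with the harder direction reducing to a lattice-theoretic verification inside $S[\varepsilon]$.

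For $(\supseteq)$, every $\F_q$-point of $\A^r_G(\varepsilon)=T_G\cdot[1_\varepsilon]$ is represented by some $\t\cdot 1_\varepsilon$ whose $j$-th coordinate is $0$ for $j\notin\varepsilon$, so each $x_j$ with $j\notin\varepsilon$ lies in $I(\A^r_G(\varepsilon))$, giving $\mathfrak{m}(\check\varepsilon)\subseteq I(\A^r_G(\varepsilon))$. For a generator $\x^{\m^{+}}-\x^{\m^{-}}$ of $I_{(q-1)L_{\bb(\varepsilon)}}$, I write $\m=(q-1)\m'$ with $\m'\in L_{\bb(\varepsilon)}$; then $\m^{+}=(q-1)(\m')^{+}$ and $\m^{-}=(q-1)(\m')^{-}$ are both supported on $\varepsilon$. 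Fermat's little theorem yields $(\t\cdot 1_\varepsilon)^{\m^{+}}=\prod_{j\in\varepsilon}t_j^{(q-1)(\m')^{+}_j}=1=(\t\cdot 1_\varepsilon)^{\m^{-}}$, so each such binomial vanishes on every $[\t\cdot 1_\varepsilon]$.

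For $(\subseteq)$, since $I(\A^r_G(\varepsilon))$ is $\bb$-graded it suffices to take an arbitrary $\bb$-homogeneous $F\in I(\A^r_G(\varepsilon))$ of degree $\aa$ and show $F\in\mathfrak{m}(\check\varepsilon)+S\cdot I_{(q-1)L_{\bb(\varepsilon)}}$. Split $F=F_\varepsilon+F_{\check\varepsilon}$, where $F_{\check\varepsilon}$ collects the monomials involving some $x_j$ with $j\notin\varepsilon$; then $F_{\check\varepsilon}\in\mathfrak{m}(\check\varepsilon)$, so $F_\varepsilon\in S[\varepsilon]$ is still $\bb(\varepsilon)$-homogeneous of degree $\aa$ and still vanishes on $\A^r_G(\varepsilon)$. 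Under the identification $\A^r_G(\varepsilon)=\{[\t\cdot 1_\varepsilon]:\t\in(\F_q^*)^r\}$, this vanishing is equivalent to $F_\varepsilon$ vanishing on $(\F_q^*)^{|\varepsilon|}$, so the task reduces to proving $F_\varepsilon\in I_{(q-1)L_{\bb(\varepsilon)}}$.

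For this last step I write $F_\varepsilon=\sum_i c_i\x^{\m_i}$ with $\bb(\varepsilon)\m_i=\aa$ for all $i$. The key observation is that $\t^{\m_i}$ depends only on the class of $\m_i$ in $\Z^\varepsilon/(q-1)\Z^\varepsilon$ for $\t\in(\F_q^*)^{|\varepsilon|}$, since $t_j^{q-1}=1$. Grouping the $\m_i$'s by these classes and using that monomials with all exponents in $\{0,\dots,q-2\}$ form an $\F_q$-basis of the function algebra of $(\F_q^*)^{|\varepsilon|}$, vanishing of $F_\varepsilon$ forces $\sum_{i\in C}c_i=0$ for each class $C$. Fixing a representative $\m^{(C)}\in C$, I rewrite $\sum_{i\in C}c_i\x^{\m_i}=\sum_{i\in C}c_i(\x^{\m_i}-\x^{\m^{(C)}})$; the exponent difference $\m_i-\m^{(C)}$ lies in $(q-1)\Z^\varepsilon\cap L_{\bb(\varepsilon)}$ by construction plus $\bb(\varepsilon)$-homogeneity, and torsion-freeness of $\Z^d$ gives $(q-1)\Z^\varepsilon\cap L_{\bb(\varepsilon)}=(q-1)L_{\bb(\varepsilon)}$, whence each such binomial lies in $I_{(q-1)L_{\bb(\varepsilon)}}$. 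Summing over $C$ produces $F_\varepsilon\in I_{(q-1)L_{\bb(\varepsilon)}}$. The main technical obstacle is precisely this per-class cancellation — simultaneously invoking the linear independence of reduced monomials on $(\F_q^*)^{|\varepsilon|}$ to extract $\sum_{i\in C}c_i=0$, and the lattice identity $(q-1)\Z^\varepsilon\cap L_{\bb(\varepsilon)}=(q-1)L_{\bb(\varepsilon)}$ to convert those scalar constraints into genuine lattice-ideal membership; everything else is routine decomposition.
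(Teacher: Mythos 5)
Your proof is correct, but it takes a genuinely different route from the paper's. The paper first realizes $T_G\cdot[1_\varepsilon]$ as a set $Y^*(f,g)$ parameterized by monomials (taking $f_i=y_i$ or $0$ and $g_i=1$), so that Theorem \ref{t:RatPar} applies and immediately shows the vanishing ideal is binomial; it then notes that any generator whose monomials avoid $\mathfrak{m}(\check{\varepsilon})$ lies in $S[\varepsilon]\cap I(T_G\cdot[1_\varepsilon])=I(G_\varepsilon\cdot T_\varepsilon)$, and identifies this with $I_{(q-1)L_{\bb(\varepsilon)}}$ by citing Corollary 4.14 of the author's earlier paper (the case $\varepsilon=[r]$) and asserting the general case is analogous. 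You bypass both the binomiality step and the external citation: your argument --- reducing exponents modulo $(q-1)\Z^{\varepsilon}$, invoking linear independence of the $(q-1)^{|\varepsilon|}$ reduced monomials as functions on $(\F_q^*)^{|\varepsilon|}$ to force $\sum_{i\in C}c_i=0$ class by class, and using torsion-freeness of $\Z^d$ to get $(q-1)\Z^{\varepsilon}\cap L_{\bb(\varepsilon)}=(q-1)L_{\bb(\varepsilon)}$ --- is essentially a from-scratch proof of the cited corollary for arbitrary $\varepsilon$. What the paper's approach buys is brevity and reuse of machinery already established; what yours buys is a self-contained, elementary argument that makes explicit where the factor $q-1$ comes from. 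Both handle the easy inclusion and the decomposition $F=F_\varepsilon+F_{\check{\varepsilon}}$ in the same way, and I see no gaps in your version.
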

\begin{proof} A polynomial $F\in I(T_{G}\cdot [1_{\varepsilon}])$ with monomials not contained in $\mathfrak{m} (\check{\varepsilon})$ lie in $S[\varepsilon]$ so that $F\in S[\varepsilon]\cap I([T_{G}\cdot 1_{\varepsilon}])=I(G_{\varepsilon}\cdot T_{\varepsilon})$, where $G_{\varepsilon}=\pi_{\varepsilon}(G)$ and $T_{\varepsilon}=\pi_{\varepsilon}(T)=(\K^*)^{|\varepsilon|}$.  By \cite[Corollary 4.14]{sahin18}, we have $I(T_{G})=I_{(q-1)L_{\beta}}$ which corresponds to the case where $\varepsilon=[r]$. We can prove similarly that  $I(G_{\varepsilon}\cdot T_{\varepsilon})=I_{(q-1)L_{\bb(\varepsilon)}}$, for the other $\emptyset \neq \varepsilon \subset [r]$. Therefore, $F \in S\cdot I_{(q-1)L_{\bb(\varepsilon)}}$.
\end{proof}
\begin{corollary}
$\displaystyle I(\A^r_G)=\bigcap_{\varepsilon\subseteq[r]}I(T_{G}\cdot [1_{\varepsilon}])$.
\end{corollary}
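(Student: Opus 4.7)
The plan is to exploit the cell decomposition of $\A^r$ already described in the section, combined with the elementary fact that the vanishing ideal of a union of sets is the intersection of their vanishing ideals.

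First, I would recall that $\A^r$ decomposes as the disjoint union $\A^r = \bigsqcup_{\varepsilon \subseteq [r]} \A^r(\varepsilon)$, where $\A^r(\varepsilon)$ consists of the points whose support is exactly $\varepsilon$. Since the group $G$ acts on $\A^r$ coordinate-wise, it preserves the support of each point, so passing to $G$-orbits respects the decomposition and yields
$$\A^r_G = \bigsqcup_{\varepsilon \subseteq [r]} \A^r_G(\varepsilon).$$
Next, I would invoke the identification already established in the text, namely $\A^r_G(\varepsilon) = T_G \cdot [1_\varepsilon]$, to rewrite the decomposition as
$$\A^r_G = \bigsqcup_{\varepsilon \subseteq [r]} T_G \cdot [1_\varepsilon].$$

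Finally, I would apply the standard principle that for any collection of subsets $\{Y_\varepsilon\}$ of a common ambient set one has $I\bigl(\bigcup_\varepsilon Y_\varepsilon\bigr) = \bigcap_\varepsilon I(Y_\varepsilon)$; this holds because a polynomial vanishes on a union precisely when it vanishes on every piece. Since the intersection of $\beta$-graded ideals is again $\beta$-graded, the identity passes unchanged to the $\beta$-graded vanishing ideals, giving
$$I(\A^r_G) = \bigcap_{\varepsilon \subseteq [r]} I(T_G \cdot [1_\varepsilon]),$$
which is the desired equality.

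There is essentially no obstacle here; the statement is a direct corollary of the cell decomposition together with the previous theorem, which supplies an explicit $\varepsilon$-cellular binomial description of each $I(T_G \cdot [1_\varepsilon])$ on the right-hand side. The only minor point worth checking is that working with orbits rather than points does not introduce any issue, but this is immediate from Equation \eqref{e:indepenceOfTheIdealFromG}: a homogeneous polynomial vanishes on an orbit iff it vanishes on any representative, so the union/intersection argument for vanishing ideals transfers without change from $\A^r$ to $\A^r_G$.
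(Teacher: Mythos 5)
Your argument is correct and is essentially identical to the paper's own proof, which also derives the identity directly from the decomposition $\A^r_G=\bigcup_{\varepsilon\subseteq[r]}\A^r_G(\varepsilon)=\bigcup_{\varepsilon\subseteq[r]}T_G\cdot[1_\varepsilon]$ together with the fact that the vanishing ideal of a union is the intersection of the vanishing ideals. You have merely spelled out the routine details (support preservation under the $G$-action and the passage to graded ideals) that the paper leaves implicit.
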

\begin{proof} 
Follows from $\displaystyle \A_{G}^r=\bigcup_{\varepsilon\subseteq[r]} \A^r_G(\varepsilon)=\bigcup_{\varepsilon\subseteq[r]}T_{G}\cdot [1_{\varepsilon}]$.
\end{proof}

\begin{theorem}\label{t:idealAffine} Let $\x^{\varepsilon}:=\prod_{i\in \varepsilon }^{} x_i=x_{i_1}\cdots x_{i_k}$ for $\varepsilon=\{i_1,\dots,i_k\}$. Then,
$$I(\A_{G}^r)=\sum\limits_{\emptyset \neq \varepsilon\subseteq[r]} \x^{\varepsilon} \cdot I_{(q-1)L_{\bb(\varepsilon)}}.$$
\end{theorem}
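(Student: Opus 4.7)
The plan is to use the previous corollary and theorem to rewrite the left side as
\[
I(\A^r_G) = \bigcap_{\varepsilon \subseteq [r]}\bigl(\mathfrak{m}(\check{\varepsilon}) + S\cdot I_{(q-1)L_{\bb(\varepsilon)}}\bigr),
\]
and then prove the desired identity by double inclusion. A key preliminary observation is that whenever $\varepsilon \subseteq \varepsilon'$, extending vectors by zero embeds $L_{\bb(\varepsilon)}$ into $L_{\bb(\varepsilon')}$, so the binomial generators of $I_{(q-1)L_{\bb(\varepsilon)}}$ remain generators of $I_{(q-1)L_{\bb(\varepsilon')}}$ once both are regarded inside $S[\varepsilon']$.

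For $\supseteq$, I would fix a generator $\x^\varepsilon f$ with $\emptyset \neq \varepsilon$ and $f \in I_{(q-1)L_{\bb(\varepsilon)}}$ and verify that it lies in every summand of the intersection. If $\varepsilon \not\subseteq \varepsilon'$, any $i \in \varepsilon \setminus \varepsilon'$ makes $x_i$ divide $\x^\varepsilon$ and also sit in $\mathfrak{m}(\check{\varepsilon'})$, so $\x^\varepsilon f \in \mathfrak{m}(\check{\varepsilon'})$. If $\varepsilon \subseteq \varepsilon'$, the preliminary observation yields $f \in I_{(q-1)L_{\bb(\varepsilon')}}$, and hence $\x^\varepsilon f \in S\cdot I_{(q-1)L_{\bb(\varepsilon')}}$.

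For $\subseteq$, I would group the monomials of $F \in I(\A^r_G)$ by support, writing $F = \sum_{\varepsilon \subseteq [r]} F_\varepsilon$ where $F_\varepsilon$ collects the monomials of $F$ whose support is exactly $\varepsilon$, and then prove by induction on $|\varepsilon|$ that $F_\varepsilon \in \x^\varepsilon \cdot I_{(q-1)L_{\bb(\varepsilon)}}$. The base case $\varepsilon = \emptyset$ reduces to $F_\emptyset = F(0,\dots,0) = F([1_\emptyset]) = 0$. For the inductive step, I would restrict to the cell by setting $x_i = 0$ for $i \notin \varepsilon$, producing $F|_\varepsilon = F_\varepsilon + \sum_{\varepsilon' \subsetneq \varepsilon} F_{\varepsilon'} \in S[\varepsilon]$; since $F|_\varepsilon$ vanishes on $G_\varepsilon \cdot T_\varepsilon$, the argument inside the proof of the previous theorem places $F|_\varepsilon$ in $I_{(q-1)L_{\bb(\varepsilon)}}$, and combining this with the induction hypothesis and the preliminary observation gives $F_\varepsilon \in I_{(q-1)L_{\bb(\varepsilon)}}$.

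The main obstacle will be to extract the factor $\x^\varepsilon$ cleanly at each inductive step. Every monomial of $F_\varepsilon$ has full support $\varepsilon$, so I can factor $F_\varepsilon = \x^\varepsilon g_\varepsilon$, but I still need $g_\varepsilon$ itself to lie in $I_{(q-1)L_{\bb(\varepsilon)}}$. Here I would invoke the standard fact that a lattice ideal is saturated with respect to each of its variables (being the kernel of a homomorphism from $S[\varepsilon]$ into a Laurent polynomial ring), giving $(I_{(q-1)L_{\bb(\varepsilon)}} : \x^\varepsilon) = I_{(q-1)L_{\bb(\varepsilon)}}$, and therefore $g_\varepsilon \in I_{(q-1)L_{\bb(\varepsilon)}}$, which closes the induction and completes the proof.
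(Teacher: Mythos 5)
Your proof is correct, but it takes a genuinely different route from the paper's on both inclusions. For $\supseteq$ the paper argues pointwise: given $F\in I_{(q-1)L_{\bb(\varepsilon)}}=S[\varepsilon]\cap I(T_G\cdot[1_\varepsilon])$ it checks directly that $\x^{\varepsilon}F$ vanishes at every orbit $[P]$ by splitting into the cases $\varepsilon\subseteq\varepsilon_P$ and $\varepsilon\not\subseteq\varepsilon_P$; you instead route through the corollary $I(\A^r_G)=\bigcap_{\varepsilon'}I(T_G\cdot[1_{\varepsilon'}])$ and the cellular description of each term, checking a generator $\x^{\varepsilon}f$ against each $\varepsilon'$ — an equivalent amount of work, and your preliminary observation that $L_{\bb(\varepsilon)}$ embeds in $L_{\bb(\varepsilon')}$ for $\varepsilon\subseteq\varepsilon'$ is exactly what is needed there. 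The real divergence is in $\subseteq$: the paper leans on the previously established fact that $I(\A^r_G)$ is a binomial ideal, takes a generator $\x^{\a}(\x^{\m^+}-\x^{\m^-})$, and shows by evaluating at suitable $0$--$1$ points that the support of the binomial part is contained in $\varepsilon=\supp(\x^{\a})$, after which membership in $\x^{\varepsilon}I_{(q-1)L_{\bb(\varepsilon)}}$ is immediate. You avoid binomiality entirely: you decompose an arbitrary $F$ by the exact support of its monomials, induct on $|\varepsilon|$, restrict to each cell, and use saturation of lattice ideals with respect to the variables to peel off the factor $\x^{\varepsilon}$. This is more self-contained (it does not need the corollary that $I(\A^r_G)$ is binomial, which rests on Theorem \ref{t:RatPar} and elimination) at the price of importing the saturation property — which, incidentally, is better justified here by noting that the preceding theorem identifies $I_{(q-1)L_{\bb(\varepsilon)}}$ with the graded vanishing ideal of a subset of the torus $T_\varepsilon$, where no variable vanishes, than by your parenthetical appeal to a Laurent-ring kernel: the lattice $(q-1)L_{\bb(\varepsilon)}$ is not saturated, so its lattice ideal is not literally such a kernel, though it is the contraction of its extension to the Laurent ring, which gives the same conclusion. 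Both arguments are valid; the paper's is shorter given its earlier results, while yours would survive in settings where binomiality of $I(\A^r_G)$ is not known in advance.
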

\begin{proof}
Firstly, we show that $I(\A_{G}^r)\subseteq \sum\limits_{\varepsilon\subseteq[r]}\x^{\varepsilon}\cdot I_{(q-1)L_{\bb(\varepsilon)}}$. We know that $I(\A_{G}^r)$ is pure binomial. So, its generators are of the form
$\x^\a(\x^{\m^+}-\x^{\m^-})\in I(\A_{G}^r)$. Then we claim that $\mbox{supp}(\x^{\m^+})\cup \mbox{supp}(\x^{\m^-})\subseteq\varepsilon$ for $\varepsilon=\mbox{supp}(\x^\a)\subseteq[r]$. If not, say there exists $i\in \mbox{supp}(\x^{\m^+})\backslash \varepsilon$, then consider the point $P$ whose $i$-th coordinate is $0$ and others are $1$. Then $\x^\a(\x^{\m^+}-\x^{\m^-})(P)=-1\neq 0$. The other option leads to a contradiction, similarly. Since $\displaystyle \A_{G}^r=\bigcup_{\varepsilon\subseteq[r]} \A^r_G(\varepsilon)=\bigcup_{\varepsilon\subseteq[r]}T_{G}\cdot [1_{\varepsilon}]$, it follows that $I(\A_{G}^r)\subseteq I(T_{G}\cdot [1_{\varepsilon}])$. So, $\x^{\m^+}-\x^{\m^-}\in I(T_{G}\cdot [1_{\varepsilon}])$, since $\x^\a\neq 0$ on $T_{G}\cdot [1_{\varepsilon}]$. Thus, $\x^{\m^+}-\x^{\m^-}\in S[\varepsilon]\cap I(T_{G}\cdot [1_{\varepsilon}])=I_{(q-1)L_{\bb(\varepsilon)}}$.\\

For the other direction, take $F\in I_{(q-1)L_{\bb(\varepsilon)}}=S[\varepsilon]\cap I(T_{G}\cdot [1_{\varepsilon}])$. Then the polynomial $\x^{\varepsilon}F=x_{i_1}\cdots x_{i_k}F$ vanishes on $\A_{G}^r$ as we explain next. Every $[P]\in \A_{G}^r$ lies in an orbit $T_{G}\cdot 1_{\varepsilon_p}$ for some $\varepsilon_p\subseteq [r]$. If $i\in \varepsilon\backslash\varepsilon_p\neq \emptyset$ then $p_i=0$ so that $\x^{\varepsilon}(P)=0$. Otherwise, $\varepsilon\subseteq\varepsilon_p$ and $\x^{\varepsilon}(P)\neq 0$. Introduce a new point $P'\in \A^r(\varepsilon)$ whose $i$-th coordinate coincides with that of $P$, i.e. $p_i=p'_i$ for all $i\in \varepsilon$. As $F\in S[\varepsilon]$, we have $F(P)=F(p_{i_1},\dots,p_{i_k})=F(P')=0$ for $[P']\in T_{G}\cdot 1_{\varepsilon}$. Therefore, we have $\x^{\varepsilon}F(P)=0$ in any case, completing the proof. 
\end{proof}
\section{Vanishing Ideal of Rational Points of a Toric Variety} \label{s:toricVariety}
Let $X=X_{\Sigma}$ be a simplicial complete toric variety over an algebraically closed field $\K=\overline{\F}_q$. Then, by a celebrated result due to Cox (see \cite{CoxRingToric}), the $\K$-rational points $X(\K)$ of the toric variety $X$, is isomorphic to the geometric invariant theory quotient $(\K^r\setminus V(B)) /G$, for the monomial ideal 
$$\displaystyle B=\langle  \x^{\hat{\sig}}=\prod_{\rho_i \notin \sig}^{} \: x_i  : \sig \in \Sig \rangle \subset S=\F_q[x_1,\dots,x_r] \text{ and }$$
 \begin{align*}
G=V(I_{L_{\bb}})\cap (\K^*)^r:&=\{P \in (\K^*)^r \: | \: (\x^{\m^+}- \x^{\m^-})(P)=0 \:\mbox{for all} \: \m\in L_{\bb} \}\\
&=\{P \in (\K^*)^r \: | \: \x^{\m}(P)=1 \:\mbox{for all} \: \m\in L_{\bb} \}.    
\end{align*}
Therefore, $\K$-rational points of $X$ are in bijection with the orbits $[P]:=G\cdot P$, for $P\in \K^r\setminus V(B)$. Hence, we may regard them as elements of the set $\A^r_G \setminus V_G(B)$. It follows that the $\F_q$-rational points of $X$ are in bijection with the orbits $[P]:=G\cdot P$, for $P\in \F_q^r\setminus V(B)$.

\begin{theorem}\label{t:saturation} If $Y\subseteq \A^r$, then the vanishing ideal in $S$ of the subset $[Y\backslash V(B)]$ of $\A^r_G \setminus V_G(B)$ is given by
$I([Y\backslash V(B)])=I(Y_G):B$.
\end{theorem}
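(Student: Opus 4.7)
My plan is a standard double-inclusion argument, reducing everything to homogeneous elements since both sides are $\beta$-graded ideals ($I(Y_G)$ by definition, and $I(Y_G):B$ because $B$ is generated by monomials). Throughout I use the observation of equation (2.1) that a homogeneous $F$ vanishes on a point $P$ iff it vanishes on the whole orbit $[P]=G\cdot P$, so working on representatives in $\A^r$ is equivalent to working on $G$-orbits in $\A^r_G$.

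For the inclusion $I(Y_G):B \subseteq I([Y\setminus V(B)])$, I would take a homogeneous $F\in I(Y_G):B$ and a point $P\in Y\setminus V(B)$. Since $P\notin V(B)$ and $B=\langle \mathbf{x}^{\hat\sigma} : \sigma\in\Sigma\rangle$, there exists some cone $\sigma\in\Sigma$ with $\mathbf{x}^{\hat\sigma}(P)\neq 0$. By hypothesis $F\,\mathbf{x}^{\hat\sigma}\in I(Y_G)$, so $F\,\mathbf{x}^{\hat\sigma}$ vanishes on $Y$, giving $F(P)\,\mathbf{x}^{\hat\sigma}(P)=0$; since $\mathbf{x}^{\hat\sigma}(P)\neq 0$, we conclude $F(P)=0$, and hence $F$ vanishes on $[Y\setminus V(B)]$.

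For the reverse inclusion, I would take a homogeneous $F\in I([Y\setminus V(B)])$ and verify that $F\cdot \mathbf{x}^{\hat\sigma}\in I(Y_G)$ for every generator $\mathbf{x}^{\hat\sigma}$ of $B$; this suffices to place $F$ in the colon. For an arbitrary $P\in Y$ I would split into two cases: if $P\notin V(B)$, then $[P]\in[Y\setminus V(B)]$ and $F(P)=0$, so $(F\,\mathbf{x}^{\hat\sigma})(P)=0$; if $P\in V(B)$, then every generator of $B$ vanishes at $P$, so in particular $\mathbf{x}^{\hat\sigma}(P)=0$ and again $(F\,\mathbf{x}^{\hat\sigma})(P)=0$. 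Either way $F\,\mathbf{x}^{\hat\sigma}$ is a homogeneous polynomial vanishing on all of $Y$, hence lies in $I(Y_G)$.

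There is no real obstacle — the content of the proof is purely formal, the only mildly delicate point being the clean splitting into the two cases $P\in V(B)$ vs.\ $P\notin V(B)$, which relies on $V(B)$ being cut out precisely by the monomial generators $\mathbf{x}^{\hat\sigma}$ so that membership in $V(B)$ forces \emph{simultaneous} vanishing of all generators. Together the two inclusions give $I([Y\setminus V(B)])=I(Y_G):B$.
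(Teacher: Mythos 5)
Your proposal is correct and follows essentially the same double-inclusion argument as the paper; the only cosmetic difference is that you verify the colon condition on the monomial generators $\mathbf{x}^{\hat\sigma}$ of $B$, whereas the paper takes an arbitrary $F'\in B$ and decomposes it into homogeneous components. Both versions hinge on the same two observations: a nonvanishing generator of $B$ exists at any $P\notin V(B)$, and $F\cdot\mathbf{x}^{\hat\sigma}$ is homogeneous and vanishes on all of $Y$ by the case split $P\in V(B)$ versus $P\notin V(B)$.
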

\begin{proof}
As $V(B)$ is $G$-invariant we first notice that
$$[Y\backslash V(B)]=[Y]\backslash [V(B)]:=Y_G \setminus V_G(B).$$

First we prove the inclusion $I([Y\backslash V(B)])\subseteq I(Y_G):B$. Let $F\in I([Y\backslash V(B)])$ be a homogeneous polynomial. Then $F$ vanishes on $Y\backslash V(B)$. Since $F'$ vanishes on $V(B)$, for all $F'\in B$, $FF'$ vanishes on $Y$. For $F'=\bigoplus_{\alpha \in \N\beta}F'_{\alpha}\in B$, we have $F'_{\alpha}\in B$, $\forall \alpha \in \N\beta$ as $B$ is a homogeneous ideal. So, $FF'_{\alpha}$ is a homogeneous polynomial vanishing on $Y_G$, i.e. $FF'_{\alpha}\in I(Y_G)$ is a homogeneous generator, and hence $FF'\in I(Y_G)$. Thus, $F\in I(Y_G):B$.

Now we show the other containment. As $I(Y_G):B$ is homogeneous, we start by taking a homogeneous generator $F$ of $I(Y_G):B$. Then $FF'\in I(Y_G)$, $\forall F' \in B$. Let us take $P\in Y\backslash V(B)$. Since $P\notin V(B)$, there is a polynomial $F'\in B$ such that $F'(P)\neq 0$. As $P\in Y$, we have $F(P)F'(P)=0,$ so $F(P)=0$. Therefore, $F\in I([Y\backslash V(B)])$.
\end{proof}

\begin{corollary} \label{c:saturation}
$I(X(\F_q))=I(\A_{G}^r(\F_q)):B$
\end{corollary}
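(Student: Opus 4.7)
The plan is to apply Theorem \ref{t:saturation} with the specific choice $Y = \F_q^r \subseteq \A^r$. With this substitution, $Y_G$ becomes $\F_q^r/G = \A_G^r(\F_q)$, so the right-hand side of the theorem's conclusion reads $I(\A_G^r(\F_q)) : B$, which matches the corollary exactly. The only remaining task is to identify the left-hand side $I([Y \setminus V(B)])$ with $I(X(\F_q))$.

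For this identification I would invoke the description of $X(\F_q)$ given in the paragraph preceding Theorem \ref{t:saturation} (and originally in Section \ref{s:toricVariety} via the Cox GIT construction): the $\F_q$-rational points of $X$ are in bijection with the orbits $[P] = G \cdot P$ for $P \in \F_q^r \setminus V(B)$, that is, with $(\F_q^r \setminus V(B))/G$. Because $B$ is a monomial ideal and monomials are $\beta$-homogeneous, equation \eqref{e:indepenceOfTheIdealFromG} shows that $V(B)$ is $G$-invariant, and therefore
\[
[Y \setminus V(B)] = (\F_q^r \setminus V(B))/G = X(\F_q)
\]
as subsets of $\A_G^r$. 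Combining these two identifications yields $I(X(\F_q)) = I([Y \setminus V(B)]) = I(\A_G^r(\F_q)) : B$.

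There is no real obstacle in this step since the substantive content — establishing the colon-ideal identity — has already been carried out in Theorem \ref{t:saturation}. The only point worth verifying carefully is that the vanishing ideal of $X(\F_q)$ as computed inside the $\beta$-graded ring $S$ is unambiguous, i.e., that one can check vanishing of $\beta$-homogeneous polynomials on orbit representatives; this is again immediate from \eqref{e:indepenceOfTheIdealFromG}, which is the very reason $I(Y_G)$ is well-defined for $G$-invariant sets $Y$ throughout the paper.
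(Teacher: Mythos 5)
Your proposal is correct and follows exactly the paper's route: the paper also derives the corollary by applying Theorem \ref{t:saturation} with $Y=\A^r(\F_q)$ and using the identification of $X(\F_q)$ with $(\F_q^r\setminus V(B))/G$ established just before that theorem. Your additional remarks on $G$-invariance of $V(B)$ and well-definedness via \eqref{e:indepenceOfTheIdealFromG} simply make explicit what the paper leaves implicit.
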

\begin{proof} Follows from Theorem \ref{t:saturation} by taking $Y=\A^r(\F_q)$. 
\end{proof}

\section{Applications}
In this section, we compute vanishing ideals of $\F_q$-rational points of some famous examples of toric varieties applying the theory developed in previous sections.

\subsection{Hirzebruch Surfaces}
\label{ex:Hirzebruch} 	
Let $X=\cl H_{\ell}$ be the Hirzebruch surface whose primitive ray generators are as follows $\vv_1=(1,0)$, $\vv_2=(0,1)$, $\vv_3=(-1,\ell)$,
and $\vv_4=(0,-1)$, for any positive integer $\ell$. The exact sequence becomes
$$\dis \xymatrix{ \mathfrak{P}: 0  \ar[r] & \Z^2 \ar[r]^{\phi} & \Z^4 \ar[r]^{\beta}& \Cl(\cl H_{\ell}) \ar[r]& 0},$$
for $\phi=[\uu_1 \: \: \uu_2]$ with $\uu_1=(1,0,-1,0)$, $\uu_2=(0,1,\ell,-1)$ and $\beta=\begin{bmatrix}
1 & 0 & 1& \ell\\
0 & 1 & 0& 1  
\end{bmatrix}$ with $L_{\beta}=\la \uu_1, \uu_2\ra$.
The dual sequence over $\K=\overline{\F}_q$ is
$$\dis \xymatrix{ \mathfrak{P}^*: 1  \ar[r] & \mathcal{G} \ar[r]^{i} & (\K^*)^{4} \ar[r]^{\pi} & (\K^*)^2 \ar[r]& 1}$$
where $\pi:\t\mapsto (t_1t_3^{-1},t_2t_3^{\ell}t_4^{-1}).$ 

Then the class group is $\Cl(\cl H_{\ell})\cong \Z^2$ and the group acting on the affine space is
$$G=\Ker(\pi)=\{(t_1,t_2,t_1,t_1^{\ell}t_2)\;|\;t_1,t_2\in\K^*\}\cong (\K^*)^2.$$ Hence, $\K$-rational points of the torus is $ T_X(\K) \cong {(\K^*)^2}\cong(\K^*)^{4} /G$ whereas $\F_q$-rational points is $ T_X(\F_q) \cong {(\F_q^*)^2}\cong(\F_q^*)^{4} /G$. Indeed, $\F_q$-rational points of $X$ is given by $X(\F_q) \cong(\F_q^4\setminus V(B)) /G=\A^4_G(\F_q)\setminus V_G(B)$, where  $$B=\langle x_1,x_3 \rangle \cap \langle x_2,x_4 \rangle=\langle x_1x_2,x_1x_4,x_3x_2,x_3x_4 \rangle$$ 
is the irrelevant ideal in the Cox ring $S=\F_q[x_1,x_2,x_3,x_4]$ which is $\Z^2$-graded via  $$\deg_{\bb}(x_1)=\deg_{\bb}(x_3)=(1,0),\quad \deg_{\bb}(x_2)=(0,1), \quad \deg_{\bb}(x_4)=(\ell,1).$$ 
The vanishing ideal of $\A^4_G(\F_q)$ over the field $\F_q$ is given below.
\begin{theorem} \label{t:I(A4)Hirzebruch}
$I(\A^4_G(\F_q))=\langle {x}_{3}{x}_{1}f_1, \quad {x}_{4}{x}_{2}{x}_{1}f_2, \quad  {x}_{4}{x}_{3}{x}_{2}f_3\rangle$, where
$$\begin{array}{lllll} 
&f_1={x}_{3}^{q-1}-{x}_{1}^{q-1}, \,  &f_2={x}_{4}^{q-1}-{x}_{2}^{q-1}{x}_{1}^{(q-1)\ell} \,
\text{ and } &f_3={x}_{4}^{q-1}-{x}_{3}^{(q-1)\ell}{x}_{2}^{q-1}.
\end{array}$$
\end{theorem}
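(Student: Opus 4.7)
The plan is to apply Theorem \ref{t:idealAffine} with $r=4$, which expresses
\[
I(\A^4_G(\F_q)) \;=\; \sum_{\emptyset \neq \varepsilon \subseteq [4]} \x^\varepsilon \cdot I_{(q-1)L_{\bb(\varepsilon)}},
\]
and then to walk through the $15$ nonempty subsets $\varepsilon\subseteq[4]$, computing $L_{\bb(\varepsilon)}=\Ker(\bb(\varepsilon))$ in each case and identifying the resulting contribution inside $S$. Let $J:=\la x_1x_3 f_1,\, x_1x_2x_4 f_2,\, x_2x_3x_4 f_3\ra$ denote the candidate right-hand side.

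Because $\bb$ has rank $2$, every singleton $\varepsilon$ produces $L_{\bb(\varepsilon)}=0$; among the two-element subsets only $\{1,3\}$ (for which $\bb_1=\bb_3$) has a nontrivial kernel $\la(1,-1)\ra$, contributing $x_1 x_3 \cdot \la x_1^{q-1}-x_3^{q-1}\ra = \la x_1 x_3 f_1\ra \subseteq J$. For the four three-element subsets, a direct kernel computation gives: $\{1,2,3\}$ and $\{1,3,4\}$ both produce kernels generated by a vector equivalent (up to padding by zeros) to $(1,-1)$, so their contributions are $S$-multiples of $x_1 x_3 f_1$ and lie in $J$; $\{1,2,4\}$ has kernel $\la(\ell,1,-1)\ra$, giving $\la x_1 x_2 x_4 f_2 \ra \subseteq J$; and $\{2,3,4\}$ has kernel $\la(1,\ell,-1)\ra$, giving $\la x_2 x_3 x_4 f_3 \ra \subseteq J$.

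The remaining case $\varepsilon=[4]$ is the technical heart. Here $L_\bb=\la u_1, u_2\ra$, and the contribution is $x_1x_2x_3x_4 \cdot I_{(q-1)L_\bb}$. I would show this sits inside $J$ by proving that for any $m = (q-1)(au_1+bu_2)\in(q-1)L_\bb$, the product $x_1x_2x_3x_4 \cdot (\x^{m^+}-\x^{m^-})$ is in $J$. The key idea is a telescoping decomposition: rewrite $\x^{m^+}-\x^{m^-}$ as a sum of shifted copies of the two primary binomials $\x^{(q-1)u_1^+}-\x^{(q-1)u_1^-}$ and $\x^{(q-1)u_2^+}-\x^{(q-1)u_2^-}$ obtained by repeatedly applying these relations, and then check by a sign analysis on $(a,b,\ell b - a)$ that after multiplying by $x_1x_2x_3x_4$ each piece picks up enough factors of $x_1x_3$, $x_1x_2x_4$, or $x_2x_3x_4$ to identify it as a multiple of the corresponding generator of $J$. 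The crucial identity that glues the different branches of the case analysis is
\[
f_3-f_2 \;=\; x_2^{q-1}\bigl(x_3^{(q-1)\ell}-x_1^{(q-1)\ell}\bigr),
\]
which is divisible by $f_1=x_3^{q-1}-x_1^{q-1}$ via the standard factorization of $A^{\ell}-B^{\ell}$.

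The reverse inclusion $J\subseteq I(\A^4_G(\F_q))$ is essentially automatic from the decomposition above, since each of the three generators already appears as a specific summand $\x^\varepsilon \cdot I_{(q-1)L_{\bb(\varepsilon)}}$. It may nonetheless be reassuring to verify it directly: each generator is $\bb$-homogeneous by a short degree calculation (using $\deg_\bb(x_1)=\deg_\bb(x_3)=(1,0)$, $\deg_\bb(x_2)=(0,1)$, $\deg_\bb(x_4)=(\ell,1)$), and vanishes at every $[P]\in \A^4_G(\F_q)$: if a coordinate of $P$ appearing in the monomial prefix is zero then the prefix kills the product, and otherwise those coordinates lie in $\F_q^*$ and Fermat's little theorem $t^{q-1}=1$ forces the binomial factor to vanish. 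The main obstacle is the telescoping step in the $\varepsilon=[4]$ case; it is not deep but requires careful bookkeeping across the sign regimes of the pair $(a,b)$.
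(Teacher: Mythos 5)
Your overall route is the same as the paper's: invoke Theorem \ref{t:idealAffine}, enumerate the nonempty $\varepsilon\subseteq[4]$, compute $L_{\bb(\varepsilon)}$, and observe that the surviving contributions are exactly $x_1x_3f_1$, $x_1x_2x_4f_2$, $x_2x_3x_4f_3$ up to divisibility. Your kernel computations for all $\varepsilon$ with $|\varepsilon|\leq 3$ agree with the paper's and are correct. The reverse inclusion is indeed automatic, as you say, since each candidate generator is literally one of the summands $\x^{\varepsilon}\cdot I_{(q-1)L_{\bb(\varepsilon)}}$.

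Where you diverge is the case $\varepsilon=[4]$, and this is also where your proposal is not yet a proof. The paper dispatches this case in one line by using that the toric ideal of the full Hirzebruch lattice is the complete intersection $I_{L_{\bb([4])}}=\langle x_3-x_1,\,x_4-x_2x_1^{\ell}\rangle=\langle x_3-x_1,\,x_4-x_3^{\ell}x_2\rangle$, so that $I_{(q-1)L_{\bb([4])}}=\langle f_1,f_2\rangle=\langle f_1,f_3\rangle$ and the summand $x_1x_2x_3x_4\cdot\langle f_1,f_2\rangle$ is visibly contained in $\langle x_1x_3f_1,\,x_1x_2x_4f_2\rangle$. You instead propose to show, for every $\m\in(q-1)L_{\bb}$, that $x_1x_2x_3x_4(\x^{\m^+}-\x^{\m^-})$ lies in $J$ by a telescoping decomposition into shifted copies of $f_1$ and $f_2$ with a sign analysis on $(a,b,\ell b-a)$. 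That is precisely the delicate point: the lattice ideal is in general only the saturation of the lattice basis ideal with respect to $x_1\cdots x_r$, so the telescoping identity $\x^{\m_1^++\m_2^+}-\x^{\m_1^-+\m_2^-}=\x^{\m_2^+}(\x^{\m_1^+}-\x^{\m_1^-})+\x^{\m_1^-}(\x^{\m_2^+}-\x^{\m_2^-})$ must be followed by a division by a common monomial factor, and one has to verify that the single prefactor $x_1x_2x_3x_4$ absorbs that division in every sign regime while still leaving the monomial prefixes $x_1x_3$, $x_1x_2x_4$, $x_2x_3x_4$ intact. You explicitly defer this bookkeeping, so as written the technical heart of the argument is asserted rather than proved. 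The fix is either to carry out that induction on $|a|+|b|$ in full, or---more economically, and as the paper does---to establish once that $I_{(q-1)L_{\bb([4])}}$ is generated by $f_1$ and $f_2$ and let divisibility by the $|\varepsilon|=3$ generators finish the argument.
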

\begin{proof}

Recall that $\varepsilon \subseteq [4]$ gives the matrix $\bb(\varepsilon)$ with columns $\bb_j$ for $j\in \varepsilon$. For instance, if $\varepsilon=\{1,2,4\}$, then $\bb(\varepsilon)=\begin{bmatrix}
1 & 0 & \ell\\
0 & 1 & 1  
\end{bmatrix}$ whose kernel is as follows 
$$L_{\bb(\varepsilon)}=\{(a_1,a_2,a_4)\in \Z^3 : a_1+\ell a_4=a_2+a_4=0\}=\{(-\ell a_4,-a_4,a_4) : a_4 \in \Z\}.$$ 
Thus, the corresponding toric ideal is $I_{L_{\bb(\varepsilon)}}=\langle x_4-x_2x_1^{\ell} \rangle$. Similarly, for 
$$\begin{array}{lllll} 
&\varepsilon=\{2,3,4\}, \text{ we have } I_{L_{\bb(\varepsilon)}}=\langle x_4-x_3^{\ell}x_2 \rangle, \\  
&\varepsilon=\{1,2,3\}, \varepsilon=\{1,3,4\} \text{ or } \varepsilon=\{1,3\} \text{ we have } I_{L_{\bb(\varepsilon)}}=\langle x_3-x_1 \rangle, \\
&\varepsilon=\{1,2,3,4\} \text{ we have } I_{L_{\bb(\varepsilon)}}=\langle x_3-x_1, x_4-x_2x_1^{\ell}\rangle=\langle x_3-x_1, x_4-x_3^{\ell}x_2 \rangle.
\end{array}$$
For any other $\varepsilon \subseteq [4]$, the kernel $L_{\bb(\varepsilon)}$ is trivial and so is the toric ideal. By Theorem \ref{t:idealAffine}, the ideal $I(\A^4_G)(\F_q)$ is generated by $\x^{\varepsilon}I_{(q-1)L_{\bb(\varepsilon)}}$, so it is generated by the following binomials:
$$\begin{array}{lllll} 
&x_1x_2x_4f_2 &\text{ for } \varepsilon=\{1,2,4\}, \\
&x_2x_3x_4f_3 &\text{ for } \varepsilon=\{2,3,4\}, \\  
&x_1x_2x_3f_1 &\text{ for } \varepsilon=\{1,2,3\}, \\
&x_1x_3x_4f_1 &\text{ for } \varepsilon=\{1,3,4\}  \\
&x_1x_3f_1  &\text{ for } \varepsilon=\{1,3\}, \\
&x_1x_2x_3x_4f_1,x_1x_2x_3x_4f_2 (\text{ or } x_1x_2x_3x_4f_3) &\text{ for } \varepsilon=\{1,2,3,4\}.
\end{array}$$
As some binomials divide other, the proof follows.
\end{proof}

We will use the following algorithm to compute generators of the intersections of ideals, that is given right after Theorem 11 of Chapter 4, Section 3 in \cite{CLO2007}:

\begin{lemma}\label{l:intersection} Let $I=\langle f_1,\dots,f_k \rangle$ and $J=\langle g_1,\dots,g_l \rangle$ be ideals in $S=\F[x_1,\dots,x_r]$. Then, a Groebner basis of the ideal $ I \cap J$ consists of the polynomials from $S$ in a Groebner basis of the ideal $\langle wf_1,\dots,wf_k,(1-w)g_1,\dots, (1-w)g_l\rangle \subseteq S[w]$ with respect to a lexicographic term order making $w$ the biggest variable.
\end{lemma}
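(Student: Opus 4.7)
The plan is to reduce the claim to the standard Elimination Theorem for Groebner bases. The essential content is the ideal-theoretic identity
\[
I \cap J \;=\; K \cap S, \qquad \text{where } K=\langle wf_1,\dots,wf_k,(1-w)g_1,\dots,(1-w)g_l\rangle \subseteq S[w],
\]
and once this is established, the lex-order conclusion will be immediate.

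First I would prove $I \cap J \subseteq K \cap S$. Given $f \in I \cap J$, simply write $f = wf + (1-w)f$. Since $f \in I$, the term $wf$ lies in $wI \subseteq K$, and since $f \in J$, the term $(1-w)f$ lies in $(1-w)J \subseteq K$. Thus $f \in K$, and clearly $f \in S$, so $f \in K \cap S$.

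Next I would prove the reverse inclusion $K \cap S \subseteq I \cap J$. Take $f \in K \cap S$, so
\[
f \;=\; \sum_{i=1}^{k} a_i(x,w)\, w f_i + \sum_{j=1}^{l} b_j(x,w)\,(1-w) g_j
\]
for some $a_i,b_j \in S[w]$. Since $f$ does not involve $w$, I may evaluate in $S$. Substituting $w=0$ gives $f = \sum_{j} b_j(x,0) g_j \in J$, while substituting $w=1$ gives $f = \sum_{i} a_i(x,1) f_i \in I$. Hence $f \in I \cap J$.

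Finally, I would invoke the Elimination Theorem: with the lexicographic ordering on $S[w]$ in which $w$ is larger than every $x_i$, if $\cl G$ is a Groebner basis of $K$, then $\cl G \cap S$ is a Groebner basis of the elimination ideal $K \cap S$, which by the identity above equals $I \cap J$. I do not anticipate a serious obstacle here; the only subtlety is the ring-theoretic bookkeeping in checking that $wf + (1-w)f = f$ really expresses $f$ as an element of $K$ when $f$ lies in $I \cap J$, and that the substitutions $w=0$ and $w=1$ make sense over $S$ — both of which are routine since $w$ is algebraically independent over $\F$.
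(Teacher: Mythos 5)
Your proof is correct and is essentially the standard argument from Cox--Little--O'Shea that the paper cites for this lemma (the identity $I\cap J = K\cap S$ via $f = wf+(1-w)f$ and the substitutions $w=0,1$, followed by the Elimination Theorem). Nothing to add.
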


\begin{theorem} \label{t:IdealHirzebruch} Let us fix the following notation:
$$\begin{array}{lllll} &F_1={x}_{3}{x}_{1}f_1={x}_{3}{x}_{1}({x}_{3}^{q-1}-{x}_{1}^{q-1}),\\
&F_2={x}_{4}{x}_{2}{x}_{1}f_2={x}_{4}{x}_{2}{x}_{1}({x}_{4}^{q-1}-{x}_{2}^{q-1}{x}_{1}^{(q-1)\ell}),\\
&F_3={x}_{4}{x}_{3}{x}_{2}f_3={x}_{4}{x}_{3}{x}_{2}({x}_{4}^{q-1}-{x}_{3}^{(q-1)\ell}{x}_{2}^{q-1}),\\
&F_4= {x}_{4}^{q}{x}_{2}-{x}_{4}{x}_{3}^{(q-1)\ell}{x}_{2}^{q}+{x}_{4}{x}_{3}^{q-1}{x}_{2}^{q}{x}_{1}^{(q-1)(\ell-1)}-{x}_{4}{x}_{2}^{q}{x}_{1}^{(q-1)\ell},\\
&F'_4= {x}_{4}^{2q-1}{x}_{2}-{x}_{4}{x}_{3}^{2(q-1)}{x}_{2}^{2q-1}+{x}_{4}{x}_{3}^{q-1}{x}_{2}^{2q-1}{x}_{1}^{q-1}-{x}_{4}{x}_{2}^{2q-1}{x}_{1}^{2(q-1)}.
\end{array}$$
Then, a set of minimal generators for the vanishing ideals are given by:
$$\begin{array}{lllll} 
&I(\mathcal{H}_{\ell}(\F_q))=\langle F_1, F_2, F_3, F_4\rangle=\langle F_1,F_4 \rangle, \text{ if $\ell >1$, and }\\
&I(\mathcal{H}_{1}(\F_q))=\langle F_1, F_2, F_3, F'_4\rangle.
\end{array}$$
\end{theorem}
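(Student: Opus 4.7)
The plan is to combine Corollary~\ref{c:saturation} with Theorem~\ref{t:I(A4)Hirzebruch} to reduce the problem to the colon computation $I(\cl H_{\ell}(\F_q)) = \langle F_1, F_2, F_3\rangle : B$, where $B = \langle x_1 x_2, x_1 x_4, x_2 x_3, x_3 x_4 \rangle$ is the irrelevant ideal. Since every generator of $B$ is divisible by $x_1$ or $x_3$, it suffices to verify that $x_1 F_4, x_3 F_4 \in \langle F_1, F_2, F_3\rangle$ in order to conclude $F_4 \in I(\cl H_{\ell}(\F_q))$. Direct expansion furnishes the identities
\[
x_3 F_4 = F_3 + x_1^{(q-1)(\ell-1)-1} x_2^q x_4 \, F_1, \qquad
x_1 F_4 = F_2 - x_2^q x_3^{q-2} x_4 \, F_1 \cdot \sum_{i=0}^{\ell-2} x_3^{(q-1)(\ell-2-i)} x_1^{(q-1)i},
\]
after invoking the factorization $x_3^{(q-1)(\ell-1)} - x_1^{(q-1)(\ell-1)} = f_1 \cdot \sum_{i=0}^{\ell-2} x_3^{(q-1)(\ell-2-i)} x_1^{(q-1)i}$. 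Both right-hand sides are polynomial precisely when $\ell \geq 2$; for $\ell = 1$ the exponent $(q-1)(\ell-1)-1$ becomes negative, which is exactly why $F_4'$ (a variant obtained by rescaling exponents so that no denominators appear) is needed instead.

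For $\ell > 1$, the collapse $\langle F_1, F_2, F_3, F_4\rangle = \langle F_1, F_4\rangle$ follows by reading the first identity in reverse: combining $F_4 = x_2 x_4 f_3 + x_1^{(q-1)(\ell-1)} x_2^q x_4 f_1$ with $F_3 = x_2 x_3 x_4 f_3$ and $F_1 = x_1 x_3 f_1$ yields
\[
F_3 = x_3 F_4 - x_1^{(q-1)(\ell-1)-1} x_2^q x_4 \, F_1 \in \langle F_1, F_4\rangle,
\]
while the observation $f_3 - f_2 = x_2^{q-1}(x_1^{(q-1)\ell} - x_3^{(q-1)\ell})$, being divisible by $f_1$, places $F_2$ into $\langle F_1, F_4\rangle$ by an analogous rearrangement.

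The main obstacle will be the reverse inclusion $I(\cl H_{\ell}(\F_q)) \subseteq \langle F_1, F_2, F_3, F_4\rangle$. Here the plan is to compute the colon $\langle F_1, F_2, F_3\rangle : B = \bigcap_{j=1}^{4} \langle F_1, F_2, F_3\rangle : b_j$ iteratively, using Lemma~\ref{l:intersection} at each step to realize the intersection via a Groebner basis. Each single-element colon picks up ``quotient generators'' such as $F_2/x_1 = x_2 x_4 f_2 \in \langle F_1, F_2, F_3\rangle : x_1$ and $F_3/x_3 = x_2 x_4 f_3 \in \langle F_1, F_2, F_3\rangle : x_3$; reducing the $S$-polynomial of these two modulo $\langle F_1, F_2, F_3\rangle$ should produce precisely $F_4$ (respectively $F_4'$ when $\ell=1$) as the only essentially new element, with all remaining $S$-pairs reducing to zero. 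Finally, minimality of the claimed generating set can be confirmed by comparing $\N\bb$-graded degrees in $\Cl(\cl H_{\ell}) \cong \Z^2$: the distinct degrees and supports of the listed generators preclude any one of them from lying in the ideal generated by the others.
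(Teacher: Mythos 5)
Your strategy is the same as the paper's: reduce to the colon ideal $\mathcal{J}:B$ with $\mathcal{J}=\langle F_1,F_2,F_3\rangle$ via Corollary \ref{c:saturation} and Theorem \ref{t:I(A4)Hirzebruch}, then compute $\mathcal{J}:B=\bigcap_j(\mathcal{J}:b_j)$ using Lemma \ref{l:intersection}. The part you actually carry out is correct and, in fact, cleaner than the paper's treatment of the same facts: writing $F_4=x_2x_4f_3+x_1^{(q-1)(\ell-1)}x_2^qx_4f_1$ and using the telescoping factorization of $x_3^{(q-1)(\ell-1)}-x_1^{(q-1)(\ell-1)}$ by $f_1$, your two identities for $x_3F_4$ and $x_1F_4$ check out; since every generator of $B$ is a multiple of $x_1$ or of $x_3$ and $\mathcal{J}:x_i\subseteq\mathcal{J}:x_ix_j$, this proves $F_4\in\mathcal{J}:B$ for $\ell>1$, and read backwards it gives $F_2,F_3\in\langle F_1,F_4\rangle$, hence the collapse to two generators. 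The paper obtains these conclusions only implicitly from its Groebner output, so this is a genuine improvement in transparency for the forward inclusion.

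The gap is the reverse inclusion $\mathcal{J}:B\subseteq\langle F_1,F_4\rangle$ (and $\subseteq\langle F_1,F_2,F_3,F_4'\rangle$ when $\ell=1$), which is the actual content of the theorem and which you only forecast: ``should produce precisely $F_4$ as the only essentially new element, with all remaining S-pairs reducing to zero.'' That forecast does not match how the computation unfolds. In the paper's calculation the four colon ideals are $\langle x_3f_1,x_4f_2\rangle$, $\langle x_3f_1,x_2f_2\rangle$, $\langle x_1f_1,x_4f_3\rangle$, $\langle x_1f_1,x_2f_3\rangle$, and the intermediate triple intersection is $\langle F_1,F_4,F_5\rangle$ with $F_5$ a genuinely new element (of degree involving $x_4^qx_3^q$) that is only absorbed at the final intersection with $\mathcal{J}:x_3x_4$; so S-pairs do \emph{not} all reduce to zero at the intermediate stages, and without recording these bases (or otherwise bounding $\mathcal{J}:B$ from above) the inclusion is unproved. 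The $\ell=1$ branch is weaker still: you neither derive $F_4'$ nor verify $F_4'\in\mathcal{J}:B$, and there the final intersection retains all of $F_1,F_2,F_3,F_4'$ with no two-generator collapse, so the structure of the answer is different from what your sketch suggests. Finally, minimality of $\{F_1,F_2,F_3,F_4'\}$ needs more than comparing degrees and supports, since $F_2,F_3,F_4'$ share the same $\Z^2$-degree; only for $\{F_1,F_4\}$ does the degree argument go through immediately.
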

\begin{proof} Recall from Theorem \ref{t:I(A4)Hirzebruch} that $\mathcal{J}:=I(A^4_G(\F_q))$ is generated by $F_1,F_2,F_3$.

By Corollary \ref{c:saturation}, $I(\mathcal{H}_{\ell}(\F_q))=\mathcal{J}:B$, where $B=\langle x_1x_2,x_1x_4,x_3x_2,x_3x_4\rangle$ and so Proposition 10 of Chapter 4, Section 4 in \cite{CLO2007} implies that $$I(\mathcal{H}_{\ell}(\F_q))=(\mathcal{J}:x_1x_2)\cap (\mathcal{J}:x_1x_4) \cap (\mathcal{J}:x_3x_2) \cap (\mathcal{J}:x_3x_4).$$ 

In the first step, we compute these ideals using the fact that when $\{h_1,\dots,h_k\}$ is a basis for $\cl J \cap \langle g \rangle $ then $\{h_1/g,\dots,h_k/g\}$ is a basis for $\cl J:g$, see Theorem 11 of Chapter 4, Section 4 in \cite{CLO2007}. 

In order to compute a basis for $\cl J \cap \langle x_1x_2 \rangle $, we use Lemma \ref{l:intersection} and compute the Groebner basis of the ideal generated by $wF_1,wF_2,wF_3,(1-w)x_1x_2$ in the ring $S[w]=\F_q[x_1,x_2,x_3,x_4,w]$ with respect to the lexicographic term order with $w>x_4>x_3>x_2>x_1$. It is a routine check that the polynomials 
$$x_2F_1,F_2,(1-w)x_1x_2,wF_1,wF_3$$ 
of $S[w]$ form such a Groebner basis and thus $x_2F_1$ and $F_2 \in S$ generates the ideal $\cl J \cap \langle x_1x_2 \rangle $. Dividing these generators by $x_1x_2$, we get $\cl J : \langle x_1x_2 \rangle =\langle x_3f_1, x_4f_2 \rangle$.

Similarly, the polynomials $x_4F_1,F_2,(1-w)x_1x_4,wF_1,wF_3$ form the Groebner basis of the ideal generated by $wF_1,wF_2,wF_3,(1-w)x_1x_4$ in $S[w]$ with respect to the same term order and thus $x_4F_1/x_1x_4=x_3f_1$ and $F_2/x_1x_4=x_2f_2 \in S$ generates the ideal $\cl J : \langle x_1x_4 \rangle $. 

Once again, the polynomials $x_2F_1,F_3,(1-w)x_3x_2,wF_1,wF_2$ form the Groebner basis of the ideal generated by $wF_1,wF_2,wF_3,(1-w)x_3x_2$ in $S[w]$ and thus $x_2F_1/x_3x_2=x_1f_1$ and $F_3/x_3x_2=x_4f_3 \in S$ generates the ideal $\cl J : \langle x_3x_2 \rangle $.

Finally, the polynomials $x_4F_1,F_3,(1-w)x_3x_4,wF_1,wF_2$ form the Groebner basis of the ideal generated by $wF_1,wF_2,wF_3,(1-w)x_3x_4$ in $S[w]$ and $\cl J : \langle x_3x_4 \rangle $ is generated by $x_4F_1/x_3x_4=x_1f_1$ and $F_3/x_3x_4=x_2f_3 \in S$.

Now, we compute $(\cl J : \langle x_1x_2 \rangle) \cap (\cl J : \langle x_1x_4 \rangle ) $ using Lemma \ref{l:intersection}. The Groebner basis of $\{ wx_3f_1, wx_4f_2,(1-w)x_3f_1,(1-w)x_2f_2 \}$ with respect to the lexicographic term order with $w>x_4>x_3>x_2>x_1$ is computed to be the following set
$\{x_3f_1, x_2x_4f_2,(1-w)x_2f_2,wx_4f_2 \}$ and thus $(\cl J : \langle x_1x_2 \rangle) \cap (\cl J : \langle x_1x_4 \rangle )$ is generated by $x_3f_1$ and $x_2x_4f_2$. Until now, $\ell$ is any positive number. The rest depends on whether $\ell>1$ or $\ell=1$.

\textbf{Case $\ell>1$:} 

As before, the Groebner basis of $\{ wx_3f_1, wx_2x_4f_2,(1-w)x_1f_1,(1-w)x_4f_3 \}$ is computed to be the following set
$\{F_1, F_4,F_5,(w-1)x_1f_1,wx_3f_1,F_6\}$, where
\begin{eqnarray*}
F_4&=&{x}_{4}^{q}{x}_{2}-{x}_{4}{x}_{3}^{(q-1)\ell}{x}_{2}^{q}+{x}_{4}{x}_{3}^{q-1}{x}_{2}^{q}{x}_{1}^{(q-1)(\ell-1)}-{x}_{4}{x}_{2}^{q}{x}_{1}^{(q-1)\ell},\\
F_5&=&{x}_{4}^{q}{x}_{3}^{q}-{x}_{4}^{q}{x}_{3}x_1^{q-1}-x_4 x_3^{(q-1)(\ell+1)+1}{x}_{2}^{q-1}+{x}_{4}{x}_{3}{x}_{2}^{q-1}{x}_{1}^{(q-1)(\ell+1)},\\
F_6&=&(w-1)x_4[{x}_{4}^{q-1}-{x}_{2}^{q-1}x_1^{(q-1)\ell}]+{x}_{4}{x}_{3}^{q-1}{x}_{2}^{q-1}[{x}_{3}^{(q-1)(\ell-1)}-{x}_{1}^{(q-1)(\ell-1)}].
      \end{eqnarray*}
Hence, $\langle x_3f_1, x_2x_4f_2 \rangle \cap (\cl J : \langle x_3x_2 \rangle )$ is generated by $F_1, F_4,F_5$, that is, we obtain $$(\cl J : \langle x_1x_2 \rangle) \cap (\cl J : \langle x_1x_4 \rangle )\cap (\cl J : \langle x_3x_2 \rangle )= \langle F_1,F_4,F_5\rangle.$$

Finally, the Groebner basis of the set $\{ wF_1,wF_4,wF_5, (1-w)x_1f_1,(1-w)x_2f_3\}$ is found to be $\{F_1, F_4,wF_5,(w-1)x_1f_1,(w-1)x_2f_3\}$. Thus, $\langle F_1,F_4,F_5 \rangle \cap (\cl J : \langle x_3x_4 \rangle )$ is generated by $F_1$ and $F_4$, completing the proof for $\ell>1$.

\textbf{Case $\ell=1$:} 

In this case, the Groebner basis of $\{ wx_3f_1, wx_2x_4f_2,(1-w)x_1f_1,(1-w)x_4f_3 \}$ is computed to be the following set
$$\{F_1, F_2,F_3, F'_4,F'_5,(w-1)x_1f_1,wx_3f_1,(w-1)x_4f_3,(w-1)x_2x_4f_3\}, \text{ where}$$
$$
F'_5={x}_{4}^{q}{x}_{3}^{q}-{x}_{4}^{q}{x}_{3}x_1^{q-1}-x_4 x_3^{(q-1)(\ell+1)+1}{x}_{2}^{q-1}+{x}_{4}{x}_{3}{x}_{2}^{q-1}{x}_{1}^{(q-1)(\ell+1)}.
$$
Hence, $\langle x_3f_1, x_2x_4f_2 \rangle \cap (\cl J : \langle x_3x_2 \rangle )$ is generated by $F_1, F_2,F_3, F'_4,F'_5$, that is, we obtain $$(\cl J : \langle x_1x_2 \rangle) \cap (\cl J : \langle x_1x_4 \rangle )\cap (\cl J : \langle x_3x_2 \rangle )= \langle F_1, F_2,F_3, F'_4,F'_5\rangle.$$

Finally, the Groebner basis of the set $$\{ wF_1,wF_2,wF_3,wF'_4,wF'_5, (1-w)x_1f_1,(1-w)x_2f_3\}$$ is found to be $\{F_1, F_2,F_3, F'_4,wF'_5,(w-1)x_1f_1,(w-1)x_2f_3\}$. Thus, we conclude that $\langle F_1, F_2,F_3, F'_4,F'_5 \rangle \cap (\cl J : \langle x_3x_4 \rangle )$ is generated by $F_1, F_2,F_3, F'_4$.
\end{proof}

\begin{remark}
$I(\cl H_{\ell}(\F_q))$ is not binomial, although $I(\A_{G}^4(\F_q))$ is so.
\end{remark}

Generating sets for the vanishing ideals $I(\A^4_G)(\F_q))$ and $I(\cl H_{\ell}(\F_q))$ are found as in the next example. We also illustrate how to find the best choice of a set $Y_G$ between $\cl H_{\ell}(\F_q))$ and $\A^4_G(\F_q))$.

\begin{example} \label{eg:goodcode} 
Let $\beta=\begin{bmatrix}
1 & 0 & 1& \ell\\
0 & 1 & 0& 1  
\end{bmatrix}$ and $q=5$ so that  $\F=\F_5$ and $\K=\overline{\F}_5$. We compute a generating set for the ideal $I(\A^4_G)$, where the group acting on the affine space is
$$G=\Ker(\pi)=\{(t_1,t_2,t_1,t_1^{\ell}t_2)\;|\;t_1,t_2\in\K^*\}\cong (\K^*)^2.$$ 
The following commands computes this vanishing ideal when $\ell=3$:
\begin{verbatim}
i1 : q=5; l=3; F=ZZ/q; beta = matrix {{1,0,1,l},{0,1,0,1}}; 
i2 : r=numColumns beta; d=numRows beta;
i3 : R=F[x_1..x_r,y_1..y_r,z_1..z_d,w];
i4 : f1=y_1,f2=y_2,f3=y_3,f4=y_4;
i5 : J=ideal(x_1-f1*(z_1),x_2-f2*(z_2),x_3-f3*(z_1),
x_4-f4*(z_1)^l*(z_2), y_1^q-y_1,y_2^q-y_2,y_3^q-y_3,y_4^q-y_4,w-1);
i6 : IAG=eliminate (J,for i from r to r+2*d+2 list R_i);
\end{verbatim}
The final output $\verb|IAG|$ is the required ideal:
$$I(\A^4_G)=\langle {x}_{1}^{5}{x}_{3}-{x}_{1}{x}_{3}^{5},{x}_{2}^{5}{x}_{3}^{13}{x}_{4}-{x}_{2}{x}_{3}{x}_{4}^{5},{x}_{1}^{13}{x}_{2}^{5}{x}_{4}-{x}_{1}{x}_{2}{x}_{4
      }^{5}\rangle.$$
In order to compute generators for $I(\cl H_{\ell}(\F_q))$, we use saturation command:
\begin{verbatim}
i7 : S=F[x_1..x_4, Degrees => entries transpose beta];
i8 : IAG=substitute(IAG,S)
i9 : B=ideal(x_1*x_2,x_2*x_3,x_3*x_4,x_4*x_1);
i10 : IX=saturate(IAG,B)
\end{verbatim}
yields \verb|IX| as follows:
$$I(\mathcal{H}_3(\F_5))=\langle {x}_{1}^{5}{x}_{3}-{x}_{1}{x}_{3}^{5},{x}_{1}^{12}{x}_{2}^{5}{x}_{4}-{x}_{1}^{4}{x}_{2}^{5}{x}_{3}^{8}{x}_{4}+{x}_{2}^{5}{x}_{3}^{12}{x}_{4}-{x}_{2}{x}_{4}^{5}\rangle .$$
The difference between $\A^4_G(\F_q)$ and $\cl H_{\ell}(\F_q)$ stems from the following $7$ points:
$$ V(B)=\left\{(0,0,0,0),(0,0,0,1),(0,0,1,0),(0,1,0,0),(0,1,0,1),(1,0,0,0),(1,0,1,0)\right\}$$
Taking $\alpha=(1,0)$, we get $B_{\alpha}=\{x_1,x_3\}$ as a basis for the vector space $(S/I)_{\alpha}$ for $I=I(\cl H_{\ell}(\F_q))$. Adding the three points $Y_3=\left\{[1,0,1,0],[1,0,0,0],[0,0,1,0]\right\}$, will increase the length by $3$. Since a non-zero polynomial $ax_1+bx_3$, for $a,b\in \F_3$, can have at most one extra root among these three points, the minimum distance will increase by two. Indeed, using the Coding Theory package introduced in \cite{CodingTheoryM2}, we compute parameters of the codes $\cl C_{\alpha,Y}$ for $Y=\cl H_{\ell}(\F_q)$ to be $[36,2,30]$ and for $Y=\cl H_{\ell}(\F_q)\cup Y_3$ to be $[39,2,32]$ with the following commands:
\begin{verbatim}
i11 : alpha={1,0}; Bd=flatten entries basis(alpha,coker gens gb IX);
i12 : PX=join(flatten apply(q,i-> apply (q,j-> {i,1,1,j})), 
apply(q,i->{i,0,1,1}), apply(q,i->{1,1,0,i}),{{1, 0, 0, 1}});
i13 : C=evaluationCode(F,PX,Balpha);
[length C.LinearCode, dim C.LinearCode, minimumWeight C.LinearCode]
i14 : PY=join(PX,{{1,0,1,0},{1,0,0,0},{0,0,1,0}});
i15 : C=evaluationCode(F,PY,Balpha);
[length C.LinearCode, dim C.LinearCode, minimumWeight C.LinearCode]
\end{verbatim}
We conclude the example speculating on why the choice we made was the best possible among all $H_{\ell}(\F_q)\subset Y \subset \A^4_G(\F_q)$. Since the weight $w(c_F)$ of a codeword $c_F=(F(P_1),\dots,f(P_{|Y|}))$ is $|Y|-|V_Y(F)|$ it follows that the minimum distance is 
$$\delta({\cl C}_{\alpha,Y})=|Y|-\max \{|V_Y(F)| \,:\, F \in (S/I)_{\alpha} \setminus \{0\}\},$$
where $V_Y(f)=\{[P]\in Y : f(P)=0\}$. Notice that $ax_1+bx_3$ vanishes on the set $Y_0:=V(B)\setminus Y_3$, for every $a,b\in \F_q$. Adding any subset $Y'_0$ of $Y_0$ to a set $Y$ does not increase the length $|Y|$ by $|Y'_0|$ and leaves the minimum distance the same. This is because $|V_Y(F)|$ also increases by the same amount $|Y'_0|$ and so the difference above does not change. Finally, adding a proper subset of $Y_3$ does not increase the minimum distance that much, since for every proper subset of size $1$ there is a polynomial vanishing on that subset. For instance, $x_1-x_3$ vanishes on $\left\{[1,0,1,0]\right\}$. Similarly, no polynomial can have two roots on a proper subset of size $2$ and there is a polynomial with one root, so that the minimum distance increases by $1$. 
\end{example}

\subsection{Weighted Projective Spaces}  Let $w_1,\dots,w_r$ be some positive integers such that $n=r-1$ of them have no nontrivial common divisor, that is we have $\gcd(w_1,\dots,\hat{w}_i,\dots, w_r)=1$, for any $i\in[r]$. In this case, we have a row matrix $\bb=[w_1 \cdots w_r]$ and the corresponding toric variety is denoted $X=\Pp(w_1,\dots,w_r)$. The semigroup $\N\bb$ is the \textit{numerical semigroup} generated by $w_1,\dots,w_r$ denoted also by $\langle w_1,\dots,w_r \rangle$ in the literature. The group $G=\{(t^{w_1},\dots,t^{w_r}) : t\in \K^*\}$ is the torus of the affine monomial curve parameterized by $x_i=t^{w_i}$, where $i\in [r]$. The toric ideal $I_{L_{\bb}}$ is the defining ideal of this monomial curve whose coordinate ring is the semigroup ring $\K[\N\bb]=\K[t^{w_1},\dots,t^{w_r}]$ when $\K=\overline{\F}_q$. 

\begin{proposition} \label{p:IdealWPS}
If $X=\Pp(w_1,\dots,w_r)$ is the weighted projective space, then its vanishing ideal $I(X(\F_q))=I(\A^r_G(\F_q))$.
\end{proposition}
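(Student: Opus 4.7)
The plan is to apply Corollary \ref{c:saturation} together with an explicit description of the irrelevant ideal $B$ of $X=\Pp(w_1,\dots,w_r)$. The standard fan of a weighted projective space has rays $v_1,\dots,v_r$ in $N=\Z^{r-1}$ subject to the single relation $w_1v_1+\cdots+w_rv_r=0$, with maximal cones $\sigma_j=\mathrm{cone}(v_1,\dots,\widehat{v_j},\dots,v_r)$. Hence $\x^{\widehat{\sigma_j}}=x_j$ and so $B=\langle x_1,\dots,x_r\rangle$, giving $V_G(B)=\{[0]\}$ and
\[
X(\F_q)=\A^r_G(\F_q)\setminus\{[0]\}.
\]
By Corollary \ref{c:saturation}, $I(X(\F_q))=I(\A^r_G(\F_q)):B$, and the inclusion $I(\A^r_G(\F_q))\subseteq I(X(\F_q))$ is immediate since $X(\F_q)\subseteq \A^r_G(\F_q)$.

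For the reverse inclusion, I would take a $\bb$-homogeneous element $F\in I(X(\F_q))$ of degree $\alpha\in\N\bb=\langle w_1,\dots,w_r\rangle$ and show that $F$ also vanishes at the origin, so that $F\in I(\A^r_G(\F_q))$. If $\alpha=0$, then since every $w_i$ is a positive integer, the only monomial of degree $0$ is the constant one, so $F\in\F_q$; as $X(\F_q)$ is nonempty this forces $F=0$. If $\alpha>0$, then every monomial $\x^{\a}$ in $F$ satisfies $\sum a_iw_i=\alpha>0$, so at least one $a_i\geq 1$, which gives $\x^{\a}(0)=0$ and hence $F(0)=0$. Combining these cases, $F$ vanishes on $X(\F_q)\cup\{[0]\}=\A^r_G(\F_q)$, finishing the proof.

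There is essentially no obstacle here: the only thing to verify carefully is that $B=\langle x_1,\dots,x_r\rangle$ for the weighted projective fan, and once this is in hand the argument reduces to the observation that strict positivity of the weights $w_i$ forces every positive-degree $\bb$-homogeneous polynomial to lie in $\langle x_1,\dots,x_r\rangle$ and thus to vanish at the origin automatically.
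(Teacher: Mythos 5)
Your proof is correct and follows essentially the same route as the paper: both rest on the decomposition $\A^r_G(\F_q)=X(\F_q)\cup\{[0]\}$ (since $B=\langle x_1,\dots,x_r\rangle$ for a weighted projective space) together with the observation that positivity of the weights forces every positive-degree $\bb$-homogeneous polynomial to vanish at the origin, i.e.\ $I(X(\F_q))\subseteq\langle x_1,\dots,x_r\rangle=I(\{0\})$. The paper phrases this in one line as $I(\A^r_G(\F_q))=I(X(\F_q))\cap I(\{0\})=I(X(\F_q))$, while you spell out the degree-$0$ and positive-degree cases explicitly; the appeal to Corollary~\ref{c:saturation} is not actually needed in your argument.
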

\begin{proof} As $\A^r_G(\F_q)=X(\F_q) \cup \{0\}$, we have the following equalities \\ $I(\A^r_G(\F_q))=I(X(\F_q)) \cap I(\{0\})=I(X(\F_q)) \cap \langle x_1,\dots,x_r \rangle=I(X(\F_q))$.
\end{proof}
If $w_i=1$, for all $i\in [r-2]$, but $w_{r-1}=a$ and $w_{r-1}=b$ are arbitrary, the vanishing ideal $I(X(\F_q))$ for $X=\Pp(1,\dots,1,a,b)$ is easy to compute.
\begin{theorem} \label{t:IdealWPS} For the weighted projective space $X=\Pp(1,\dots,1,a,b)$, the vanishing ideal $I(X(\F_q))$ is generated by the following binomials
$$\begin{array}{lllll} 
&x_ix_j(x_i^{q-1}-x_j^{q-1}) & \text{ for } 1\leq i < j <r-1, \\
&x_kx_{r-1}(x_k^{(q-1)a}-x_{r-1}^{q-1}) &\text{ for } 1\leq k <r-1, \\  
&x_kx_{r}(x_k^{(q-1)b}-x_r^{q-1}) &\text{ for } 1\leq k <r-1, \\
& x_{r-1}x_{r}(x_{r-1}^{(q-1)b}-x_r^{(q-1)a}).
\end{array}$$
\end{theorem}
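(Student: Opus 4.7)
By Proposition \ref{p:IdealWPS}, $I(X(\F_q)) = I(\A^r_G(\F_q))$, so the strategy is to apply Theorem \ref{t:idealAffine} to express
$$I(\A^r_G(\F_q))=\sum_{\emptyset\neq\varepsilon\subseteq[r]} \x^{\varepsilon}\cdot I_{(q-1)L_{\bb(\varepsilon)}},$$
and reduce this sum to the ideal generated by the four families of binomials in the statement. When $|\varepsilon|=1$ the lattice $L_{\bb(\varepsilon)}$ is trivial because all weights are positive, so such subsets contribute nothing.

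I would next treat the case $|\varepsilon|=2$ directly. The structure of $\bb=[1,\dots,1,a,b]$ yields four subcases: two indices from $[r-2]$ give $L_{\bb(\varepsilon)}=\Z\cdot(1,-1)$; one index from $[r-2]$ paired with $r-1$ (respectively $r$) gives $\Z\cdot(a,-1)$ (respectively $\Z\cdot(b,-1)$); and $\{r-1,r\}$ gives $\Z\cdot(b,-a)$, where the hypothesis that every $r-1$ of the $w_i$ be coprime forces $\gcd(a,b)=1$. Scaling by $q-1$ and multiplying by $\x^\varepsilon$ in each subcase produces precisely the four families of binomials listed in the theorem, which therefore all lie in $I(X(\F_q))$.

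The heart of the argument is to absorb the contributions with $|\varepsilon|\ge 3$. I would observe that $L_{\bb(\varepsilon)}$ admits a $\Z$-basis whose elements are each supported on some two-element subset $\varepsilon'\subseteq\varepsilon$: differences $e_i-e_j$ between indices in $[r-2]$, vectors $a e_i-e_{r-1}$ or $b e_i-e_r$ involving some $i\in[r-2]$, and $b e_{r-1}-a e_r$. For each such basis element $v$ supported on $\varepsilon'$, the factoring identity $\x^{\varepsilon}(\x^{(q-1)v^+}-\x^{(q-1)v^-}) = \x^{\varepsilon\setminus\varepsilon'}\cdot\x^{\varepsilon'}(\x^{(q-1)v^+}-\x^{(q-1)v^-})$ places the scaled two-element binomial inside the ideal generated by a listed family member. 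An arbitrary binomial $\x^{m^+}-\x^{m^-}$ with $m\in(q-1)L_{\bb(\varepsilon)}$ is then reduced via iterated applications of $A-B=(A-C)+(C-B)$, where $C$ is obtained by successively replacing factors $\x^{(q-1)v^+}$ in $\x^{m^+}$ by $\x^{(q-1)v^-}$ using the two-element binomials.

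The main obstacle is this reduction step, since $(q-1)L_{\bb(\varepsilon)}$ is not saturated and its lattice ideal might \emph{a priori} admit extra generators beyond the $(q-1)$-times-basis binomials. The delicate case is $\varepsilon\supseteq\{i,r-1,r\}$ with $i\in[r-2]$, where $\Z$-combinations of the two-element basis produce binomials like $x_i^{(q-1)(a+b)}-x_{r-1}^{q-1}x_r^{q-1}$; showing these reduce to the listed generators uses $\gcd(a,b)=1$ together with the factorization $y^N-z^N=(y-z)\sum_{k=0}^{N-1}y^{N-1-k}z^k$ applied to $y=x_i^{(q-1)a}$, $z=x_{r-1}^{q-1}$ (and symmetrically for $b$ and $r$).
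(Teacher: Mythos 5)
Your proposal follows essentially the same route as the paper's proof: reduce to $I(\A^r_G(\F_q))$ via Proposition \ref{p:IdealWPS}, decompose by Theorem \ref{t:idealAffine}, read off the $|\varepsilon|=2$ contributions, and absorb the $|\varepsilon|\ge 3$ contributions by showing each $I_{(q-1)L_{\bb(\varepsilon)}}$ is generated by two-variable binomials (the paper simply asserts this as a complete-intersection property, where you carry out the telescoping reduction by hand and correctly flag the non-saturation subtlety). One caveat: your claim that well-formedness forces $\gcd(a,b)=1$ is only true for $r=3$; for $r\ge 4$ the lattice $L_{\bb(\{r-1,r\})}$ is generated by $(b,-a)/\gcd(a,b)$, a point the paper's proof also passes over tacitly when it lists $x_{r-1}^{(q-1)b}-x_r^{(q-1)a}$ as the generator.
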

\begin{proof} By the virtue of Proposition \ref{p:IdealWPS}, it suffices to find generators for the ideal $I(\A^4_G)(\F_q)$ which by Theorem \ref{t:idealAffine} come from $\x^{\varepsilon}I_{(q-1)L_{\bb(\varepsilon)}}$. When $|\varepsilon|<2$, the toric ideal of the numerical semigroup corresponding to $\bb(\varepsilon)$ is trivial. When $|\varepsilon|=2$, the toric ideal $I_{(q-1)L_{\bb(\varepsilon)}}$ is a complete intersection generated by one of the binomials below: 
$$\begin{array}{lllll} 
&f_{i,j}=x_i^{q-1}-x_j^{q-1} &\text{ if } \varepsilon=\{i,j\} \text{ for } 1\leq i < j <r-1, \\
&f_{k,r-1}=x_k^{(q-1)a}-x_{r-1}^{q-1} &\text{ if } \varepsilon=\{k,r-1\} \text{ for } 1\leq k <r-1, \\  
&f_{k,r}=x_k^{(q-1)b}-x_r^{q-1} & \text{ if } \varepsilon=\{k,r\} \text{ for } 1\leq k <r-1, \\
&f_{r-1,r}=x_{r-1}^{(q-1)b}-x_r^{(q-1)a} & \text{ if } \varepsilon=\{r-1,r\}.
\end{array}$$
Therefore, the generators coming from $\x^{\varepsilon}I_{(q-1)L_{\bb(\varepsilon)}}$ are exactly the binomials given in the statement of the Theorem \ref{t:IdealWPS}. Now, we prove that they are indeed sufficient, since when $|\varepsilon|>2$ they divide the rest of the binomials. For if $\varepsilon=\{i_1,\dots,i_k\}$, then $I_{(q-1)L_{\bb(\varepsilon)}}$ is a complete intersection generated by $k-1$ of the binomials $f_{i,j}$, $f_{k,r-1}$, $f_{k,r}$ and $f_{r-1,r}$ above. Thus, the generators coming from $\x^{\varepsilon}I_{(q-1)L_{\bb(\varepsilon)}}$ will be the $k-1$ of the binomials $x_{i_1}\cdots x_{i_k}f_{i,j}$, $x_{i_1}\cdots x_{i_k}f_{k,r-1}$, $x_{i_1}\cdots x_{i_k}f_{k,r}$ and $x_{i_1}\cdots x_{i_k}f_{r-1,r}$ which are divisible by the binomials coming from the case $|\varepsilon|=2$.
\end{proof}
As a particular case we single out the following.
\begin{corollary}
$I(\Pp(1,a,b)(\F_q))$ is generated by the following binomials
$$x_1x_{2}(x_1^{(q-1)a}-x_{2}^{q-1}), \quad x_1x_{3}(x_1^{(q-1)b}-x_3^{q-1}), \quad x_{2}x_{3}(x_{2}^{(q-1)b}-x_3^{(q-1)a}).$$
\end{corollary}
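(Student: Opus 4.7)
The plan is to derive this as an immediate specialization of Theorem \ref{t:IdealWPS}, setting $r=3$ and reading off the generators from the four families listed there. With $r=3$, the only integer $k$ satisfying $1 \le k < r-1 = 2$ is $k=1$, so each of the three parameterized families (second, third, fourth in the statement of Theorem \ref{t:IdealWPS}, the last being not parameterized) collapses to a single binomial, which matches the three binomials in the corollary.

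Concretely, first I would observe that the condition $1 \le i < j < r-1$ in the first family of Theorem \ref{t:IdealWPS} forces $j < 2$ and $i < j$, which is vacuous, so no binomial comes from this family when $r=3$. Next, substituting $k=1$ and $r-1=2$ in the second family yields $x_1 x_2(x_1^{(q-1)a} - x_2^{q-1})$. Substituting $k=1$ and $r=3$ in the third family yields $x_1 x_3(x_1^{(q-1)b} - x_3^{q-1})$. Finally, the fourth family with $r-1=2$ and $r=3$ gives $x_2 x_3(x_2^{(q-1)b} - x_3^{(q-1)a})$. These are exactly the three binomials in the statement.

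Since Theorem \ref{t:IdealWPS} already proves that the listed binomials generate $I(X(\F_q))$ for arbitrary $r$ (and its proof passes via Proposition \ref{p:IdealWPS} plus Theorem \ref{t:idealAffine} applied to the ideals $\x^{\varepsilon} I_{(q-1) L_{\bb(\varepsilon)}}$), no further work is required: the corollary follows by instantiation. I do not anticipate any obstacle; the argument is purely a rewriting exercise, and one could simply cite Theorem \ref{t:IdealWPS} with $r=3$, $w_1=1$, $w_2=a$, $w_3=b$.
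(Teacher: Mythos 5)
Your proposal is correct and coincides with the paper's own argument, which simply cites Theorem \ref{t:IdealWPS} as a direct consequence; you have merely spelled out the instantiation $r=3$, $w_1=1$, $w_2=a$, $w_3=b$ explicitly. No further commentary is needed.
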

\begin{proof}
Direct consequence of Theorem \ref{t:IdealWPS}.
\end{proof}
\begin{remark} Mercier and Rolland
\cite{MRidealOfPn} has given a binomial generating set for the ideal $I(\Pp^n(\F_q))$ and Theorem \ref{t:IdealWPS} generalizes this result to some weighted projective spaces. We recommend the paper \cite{BDGidealOfPn} by Beelen, Datta and Ghorpade in order to see how they use the set given by \cite{MRidealOfPn} to obtain a footprint bound for the minimum distance of the corresponding code.
\end{remark}
One can use the vast literature about numerical semigroups and their toric ideals together with Theorem \ref{t:idealAffine} and Proposition \ref{p:IdealWPS} to give generating sets for families of weighted projective spaces. In order to state some of the results scattered the literature we recall some key concepts. For a numerical semigroup $W$ generated by $w_1,\dots,w_r$, the subset of pseudo-Frobenius numbers are defined by
$$PF(W)=\{ z\in\Z \setminus W:z+w\in W \; \mbox{for all} \; w\in W\setminus \{0\}\}.$$ The largest integer $g(W)\notin W$ belongs to $PF(W)$ and is called
the Frobenius number of $W$. If $PF(W)=\{ g(W)\}$, then $W$ is called symmetric, whereas if $PF(W)=\{ g(W)/2,g(W)\}$, it is called pseudosymmetric.

It is well known that any of  $\Pp(lw_1,lw_2,w_3)$, $\Pp(lw_1,w_2,lw_3)$ or $\Pp(w_1,lw_2,lw_3)$ is isomorphic to $\Pp(w_1,w_2,w_3)$, for any positive integer $l$, we assume that $w_1,w_2$ and $w_3$ are relatively prime to each other and $w_1<w_2<w_3$.
\begin{proposition} If $W$ is symmetric, then $w_3=a_{31}w_1+a_{32}w_2$ for some non-negative integers $a_{31}$ and $a_{32}$ and the vanishing ideal of $\Pp(w_1,w_2,w_3)(\F_q)$ is generated by the following $4$ binomials
$$
\begin{array}{lll}
     &  x_1x_2(x_1^{(q-1)w_2}-x_2^{(q-1)w_1}),  \quad &x_1x_3(x_1^{(q-1)w_3}-x_3^{(q-1)w_1}),\\
     & x_2x_3(x_2^{(q-1)w_3}-x_3^{(q-1)w_2}),  \quad
&x_1x_2x_3(x_3^{q-1}-x_1^{(q-1)a_{31}}x_2^{(q-1)a_{32}}).
\end{array}
$$
If $W$ is not symmetric, then there are $a_1,a_2$ and $a_3$ such that $a_iw_i=a_{ij}w_j+a_{ik}w_k$, for $\{ i,j,k\}=\{ 1,2,3\}$ and the vanishing ideal of $\Pp(w_1,w_2,w_3)(\F_q)$ is generated by the following $6$ binomials
$$
\begin{array}{lll}
&  x_1x_2(x_1^{(q-1)w_2}-x_2^{(q-1)w_1}),  \quad 
&x_1x_2x_3(x_1^{(q-1)a_1}-x_2^{(q-1)a_{12}}x_3^{(q-1)a_{13}}),\\
& x_1x_3(x_1^{(q-1)w_3}-x_3^{(q-1)w_1}),\quad
& x_1x_2x_3(x_2^{(q-1)a_2}-x_1^{(q-1)a_{21}}x_3^{(q-1)a_{23}}),\\
&x_2x_3(x_2^{(q-1)w_3}-x_3^{(q-1)w_2}), \quad 
 &x_1x_2x_3(x_3^{(q-1)a_3}-x_1^{(q-1)a_{31}}x_2^{(q-1)a_{32}}).
\end{array}
$$
\end{proposition}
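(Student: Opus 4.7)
The plan is to combine Proposition~\ref{p:IdealWPS} and Theorem~\ref{t:idealAffine} to reduce the problem to computing the lattice ideals $I_{(q-1)L_{\bb(\varepsilon)}}$ for each $\emptyset\neq\varepsilon\subseteq[3]$, and then to invoke Herzog's classical structure theorem on the defining ideals of $3$-generated numerical semigroup rings.

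Proposition~\ref{p:IdealWPS} identifies $I(\Pp(w_1,w_2,w_3)(\F_q))$ with $I(\A^3_G(\F_q))$, which by Theorem~\ref{t:idealAffine} equals $\sum_{\emptyset\neq\varepsilon\subseteq[3]}\x^\varepsilon\cdot I_{(q-1)L_{\bb(\varepsilon)}}$. Singletons contribute nothing. For each pair $\varepsilon=\{i,j\}$, the standing pairwise coprimality assumption forces $L_{\bb(\varepsilon)}=\Z\cdot(w_j,-w_i)$ to be of rank one, so $I_{(q-1)L_{\bb(\varepsilon)}}$ is the principal ideal generated by $x_i^{(q-1)w_j}-x_j^{(q-1)w_i}$. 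Multiplication by $x_ix_j$ produces exactly the three pair binomials common to both cases of the proposition.

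The substantive case is $\varepsilon=[3]$, where $L_\bb$ is saturated of rank two and $I_{L_\bb}$ is the defining toric ideal of the affine monomial curve $\K[t^{w_1},t^{w_2},t^{w_3}]$. Herzog's theorem states that $W=\la w_1,w_2,w_3\ra$ is symmetric if and only if $I_{L_\bb}$ is a complete intersection (with two generators), while in the non-symmetric case $I_{L_\bb}$ is minimally generated by three binomials $x_i^{a_i}-x_j^{a_{ij}}x_k^{a_{ik}}$ whose exponents are the minimal positive solutions of $a_iw_i=a_{ij}w_j+a_{ik}w_k$. In the symmetric case, combining symmetry with pairwise coprimality and the ordering $w_1<w_2<w_3$ forces $w_3\in\la w_1,w_2\ra$, so the CI generators may be chosen as $x_3-x_1^{a_{31}}x_2^{a_{32}}$ together with a pair relation; once scaled by $q-1$ and multiplied by $\x^{[3]}=x_1x_2x_3$, the pair relation is absorbed into the $\varepsilon=\{1,2\}$ contribution, giving the stated four generators. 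In the non-symmetric case all three Herzog binomials give distinct essential additional generators, yielding the six binomials displayed.

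The main obstacle is the passage from a generating set of $I_{L_\bb}$ to one of $I_{(q-1)L_\bb}$: since $(q-1)L_\bb$ is a non-saturated sublattice of $L_\bb$ of finite index, its lattice ideal could a priori require additional generators beyond the $(q-1)$-scaled Herzog binomials. The plan to resolve this is to verify directly that for every $\m\in L_\bb$, the binomial $\x^{(q-1)\m^+}-\x^{(q-1)\m^-}$ lies in the ideal generated by the scaled Herzog binomials, using the telescoping identity $X^{q-1}-Y^{q-1}=(X-Y)(X^{q-2}+X^{q-3}Y+\cdots+Y^{q-2})$ applied inductively along a $\Z$-expression of $\m$ in terms of the Herzog generators. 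This is essentially the same divisibility argument underlying the final step of the proof of Theorem~\ref{t:IdealWPS}.
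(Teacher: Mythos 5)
Your overall architecture coincides with the paper's: reduce via Proposition \ref{p:IdealWPS} and Theorem \ref{t:idealAffine} to the contributions $\x^{\varepsilon}\cdot I_{(q-1)L_{\bb(\varepsilon)}}$, dispose of singletons, handle the pairs using pairwise coprimality, and invoke Herzog's classification of the toric ideal of $\la w_1,w_2,w_3\ra$ for $\varepsilon=\{1,2,3\}$, discarding the scaled pair relation in the symmetric case because it is divisible by the $\varepsilon=\{1,2\}$ generator. This is exactly what the paper does, except that the paper passes silently from generators of $I_{L_{\bb(\varepsilon)}}$ to generators of $I_{(q-1)L_{\bb(\varepsilon)}}$ --- the step you rightly single out as the main obstacle.

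Your proposed resolution of that step, however, does not work as stated. The identity $X^{q-1}-Y^{q-1}=(X-Y)(X^{q-2}+\cdots+Y^{q-2})$ with $X=\x^{\m^+}$ and $Y=\x^{\m^-}$ exhibits the scaled binomial as a multiple of the \emph{unscaled} one, i.e., it places $\x^{(q-1)\m^+}-\x^{(q-1)\m^-}$ in $I_{L_{\bb}}$ rather than in the ideal generated by the scaled Herzog binomials --- the wrong direction. Moreover, inducting along a $\Z$-expression of $\m$ in the Herzog lattice vectors only uses that those vectors generate the lattice; the standard identity for $\m=\m'+\m''$ introduces a monomial cofactor, so the induction yields membership only after multiplication by a monomial, i.e., generation up to saturation by $x_1x_2x_3$ --- precisely the difficulty you set out to avoid. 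The correct (and shorter) argument uses the full strength of Herzog's theorem, namely that the binomials $B_1,\dots,B_k$ generate $I_{L_{\bb}}$ \emph{as an ideal}: every defining generator $\x^{(q-1)\m^+}-\x^{(q-1)\m^-}$ of $I_{(q-1)L_{\bb}}$ is the image of $\x^{\m^+}-\x^{\m^-}\in I_{L_{\bb}}=\la B_1,\dots,B_k\ra$ under the ring endomorphism $x_i\mapsto x_i^{q-1}$, hence lies in the ideal generated by the images of $B_1,\dots,B_k$, and these images are exactly the $(q-1)$-scaled Herzog binomials. With that substitution the rest of your argument goes through and agrees with the paper's proof.
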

\begin{proof}
If $W$ is symmetric, then by \cite[Theorem 3.10]{JH1970}, $w_3=a_{31}w_1+a_{32}w_2$ for some non-negative integers $a_{31}$ and $a_{32}$, and the toric ideal of the semigroup $W$ is generated by  $x_1^{w_2}-x_2^{w_1}$ and $x_3-x_1^{a_{31}}x_2^{a_{32}}$. When $\varepsilon=\{1,2,3\}$, $\N\beta(\varepsilon)=W$, so we get the  binomials $x_1x_2x_3(x_1^{(q-1)w_2}-x_2^{(q-1)w_1})$ and $x_1x_2x_3(x_3^{q-1}-x_1^{(q-1)a_{31}}x_2^{(q-1)a_{32}})$ from here. If $\varepsilon=\{1,2\}$, then $\N\beta(\varepsilon)=\langle w_1,w_2 \rangle$, and so we get the binomial $x_1x_2(x_1^{(q-1)w_2}-x_2^{(q-1)w_1})$. Similarly, $\varepsilon=\{1,3\}$ gives $\N\beta(\varepsilon)=\langle w_1,w_3 \rangle$ and the binomial $x_1x_3(x_1^{(q-1)w_3}-x_3^{(q-1)w_1})$ and finally $\varepsilon=\{2,3\}$ gives the binomial $x_2x_3(x_2^{(q-1)w_3}-x_3^{(q-1)w_2})$, completing the proof for the first case.

If $W$ is not symmetric, then by \cite[Proposition 3.2]{JH1970} there are positive integers $a_1,a_2$ and $a_3$ such that $a_iw_i=a_{ij}w_j+a_{ik}w_k$, for $\{ i,j,k\}=\{ 1,2,3\}$, satisfying
$a_{21}+a_{31}=a_1,a_{12}+a_{32}=a_2,a_{13}+a_{23}=a_3$, and the toric ideal is generated by
$$g_1=x_1^{a_1}-x_2^{a_{12}}x_3^{a_{13}}, \quad g_2=x_2^{a_2}-x_1^{a_{21}}x_3^{a_{23}}, \quad g_3=x_3^{a_3}-x_1^{a_{31}}x_2^{a_{32}}.$$ In fact, these $a_i$'s are the smallest positive integers with that property. Thus, when
$\varepsilon=\{1,2,3\}$, $\N\beta(\varepsilon)=W$, so we get the generators 
$$x_1x_2x_3g_1(x_1^{q-1},x_2^{q-1},x_2^{q-1}),\, x_1x_2x_3g_2(x_1^{q-1},x_2^{q-1},x_2^{q-1}), \, x_1x_2x_3g_3(x_1^{q-1},x_2^{q-1},x_2^{q-1}).$$ 
If $\varepsilon=\{1,2\}$, then $\N\beta(\varepsilon)=\langle w_1,w_2 \rangle$, and so we get $x_1x_2(x_1^{(q-1)w_2}-x_2^{(q-1)w_1})$ as in the first case. Similarly, $\varepsilon=\{1,3\}$ gives $\N\beta(\varepsilon)=\langle w_1,w_3 \rangle$ and the binomial $x_1x_3(x_1^{(q-1)w_3}-x_3^{(q-1)w_1})$ and finally $\varepsilon=\{2,3\}$ gives $x_2x_3(x_2^{(q-1)w_3}-x_3^{(q-1)w_2})$, completing the proof for the second case.
\end{proof}

\begin{remark}
Let $X=\Pp(1,1,2)$ and $\K=\overline{\F}_3$. Then, the $\F_3$-rational points are $X(\F_3)=(\F_3^3\setminus \{0\})/G$, where $G=\{(\lambda,\lambda,\lambda^2) : \lambda \in \K^*\}$. However, we can not replace $G$ by the subgroup $G(\F_3)=\{(\lambda,\lambda,\lambda^2) : \lambda \in \F_3^*\}$. For instance, the points $[0:0:1]$ and $[0:0:2]$ are the same in $X(\F_3)$, as there is a $\lambda\in \K^*$ with $\lambda^2=2$ so that $(\lambda,\lambda,\lambda^2)\cdot(0,0,1)=(0,0,2)$. But for any $\lambda \in \F_3^*$, $\lambda^2=1$ and $[0:0:1] \neq [0:0:2]$ in $(\F_3^3\setminus \{0\})/G(\F_3)$. However, these points have the same vanishing ideal $\langle x_1,x_2\rangle$ in $S=\F_3[x_1,x_2,x_3]$ in any case.
\end{remark}

\subsection{Product of Projective Spaces}
The product of projective spaces is also a toric variety denoted by $X=\Pp^{n_1}\times \cdots \times \Pp^{n_k}$ with the class group isomorphic to $\Z^k$. The Cox ring $S=\F_q[x_{1,1},\dots,x_{1,r_1},\dots,x_{k,1},\dots,x_{k,r_k}]$ is graded via 
$$\deg(x_{1,1})=\cdots=\deg(x_{1,r_1})=\e_1, \dots,\deg(x_{k,1})=\cdots=\deg(x_{k,r_k})=\e_k,$$
where $\e_1,\dots,\e_k\in \Z^k$ form the standard basis, and $r_i=n_i+1$, for $i\in [k]$. The monomial ideal is $$B=\langle x_{1,1},\dots,x_{1,r_1}\rangle \cap \cdots \cap \langle x_{k,1},\dots,x_{k,r_k}\rangle.$$
\begin{corollary}
If $X=\Pp^{n_1}\times \cdots \times \Pp^{n_k}$ is a product of projective spaces then $I(X(\F_q))=I(\A^r_G(\F_q))$.
\end{corollary}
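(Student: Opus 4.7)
The plan is to combine the decomposition $\A^r_G(\F_q)=X(\F_q)\cup V_G(B)(\F_q)$ with the obvious multi-grading of the Cox ring. The inclusion $I(\A^r_G(\F_q))\subseteq I(X(\F_q))$ is automatic because $X(\F_q)\subseteq \A^r_G(\F_q)$, so it suffices to show the reverse inclusion, which reduces to proving that every $\bb$-graded $F\in I(X(\F_q))$ vanishes on $V_G(B)(\F_q)$ as well. Alternatively one can invoke Corollary \ref{c:saturation} and show that the saturation $I(\A^r_G(\F_q)):B$ does not enlarge $I(\A^r_G(\F_q))$, but the direct set-theoretic argument seems cleaner.

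Fix a multihomogeneous $F\in I(X(\F_q))$ of multidegree $\aa=(a_1,\dots,a_k)$, and pick $P\in \F_q^r$ with at least one block of coordinates equal to zero; write $P=(P_1,\dots,P_k)$ and $I=\{i\in[k] : P_i=0\}\neq \emptyset$. The first case is when $a_i>0$ for some $i\in I$: then every monomial of $F$ carries some $x_{i,j}$ to a positive power (the $i$-th coordinate of its multidegree must be $a_i>0$), and all those variables vanish at $P$, forcing $F(P)=0$.

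The main case is when $a_i=0$ for every $i\in I$, so that $F$ involves only variables $x_{j,l}$ with $j\in J:=[k]\setminus I$. If $J=\emptyset$ then $\aa=\mathbf 0$ and $F$ is a constant that vanishes on the nonempty set $X(\F_q)$, so $F=0$. Otherwise $F$ may be regarded as a multihomogeneous element of the Cox ring of $\prod_{j\in J}\Pp^{n_j}$. The key point is that the natural projection $X(\F_q)\twoheadrightarrow \prod_{j\in J}\Pp^{n_j}(\F_q)$ is surjective: any tuple of classes on the smaller product may be lifted to a point of $X(\F_q)$ by choosing any nonzero block for the indices $i\in I$ (which exists since $\F_q^{r_i}\setminus\{0\}\neq\emptyset$). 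Consequently $F$ must vanish on the whole smaller product, and since $P_j\neq 0$ for $j\in J$, we get $F(P)=F(P_j : j\in J)=0$.

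The only technical delicacy I anticipate is making the multidegree/support bookkeeping airtight, in particular justifying that the $\Z^k$-graded decomposition of $I(X(\F_q))$ lets us work one multidegree at a time and that the projection argument really produces an element of $I\bigl(\prod_{j\in J}\Pp^{n_j}(\F_q)\bigr)$ of the same multidegree. Once these are in place, the three cases above exhaust all points of $V_G(B)(\F_q)$, giving $I(X(\F_q))\subseteq I(\A^r_G(\F_q))$ and completing the proof.
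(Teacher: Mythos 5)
Your proof is correct, but it takes a genuinely different route from the paper. The paper reduces to the finiteness of both sets, writes each vanishing ideal as $\bigcap_{[P]}I([P])$, and shows that every $[P]\in V_G(B)(\F_q)$ dominates a point $[P']\in X(\F_q)$ in the sense $I([P'])\subseteq I([P])$; this containment is verified via the explicit cellular--binomial description $I([P])=\mathfrak{m}(\check{\varepsilon})+S\cdot I_{\kay_p,L_{\bb(\varepsilon)}}$ from Proposition \ref{p:IdealPoint}, enlarging the support one block at a time. You instead argue directly on a multihomogeneous $F\in I(X(\F_q))$ and a point $P$ with some zero blocks, splitting on whether the multidegree $\aa$ is positive on a vanishing block (in which case every monomial of $F$ dies at $P$) or zero there (in which case $F$ only involves the nonzero blocks and you evaluate it at a lift of $P$ to $X(\F_q)$). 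The underlying geometric idea --- complete the zero blocks with arbitrary nonzero vectors --- is the same in both arguments, but yours is more elementary and self-contained: it bypasses Proposition \ref{p:IdealPoint} and the intersection-of-point-ideals decomposition entirely, and it makes visible exactly where the $\Z^k$-grading of the product is used, which explains why the analogous statement fails for, say, Hirzebruch surfaces. What the paper's version buys in exchange is that it runs entirely inside the cellular machinery of Section 3 and so extends to situations where one only has the point-ideal description. The bookkeeping points you flag (working one multidegree at a time, and that the restricted polynomial lies in the ideal of the smaller product) are both routine: every homogeneous element of $I(X(\F_q))$ vanishes on all representatives of points of $X(\F_q)$ by \eqref{e:indepenceOfTheIdealFromG}, and your lifting argument supplies the rest.
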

\begin{proof} Recall that $X=\A^r_G \setminus V_G(B)$. Since $X(\F_q)$ and $\A^r_G(\F_q)$ are finite, their ideals are given by $$\displaystyle I(X(\F_q))=\bigcap_{[P]\in X(\F_q)} I([P]) \quad \text{ and } I(\A^r_G(\F_q))=\bigcap_{[P]\in \A^r_G(\F_q)} I([P]).$$
Our aim is to prove that for any $[P]\in \A^r_G(\F_q)$ there is a point $[P']\in X$ with $I([P'])\subset I([P])$ so the intersections are the same. If $[P]\in X$, then $[P']=[P]$. If $[P]\in V_G(B)$ with support $\varepsilon$, then $[P]\in V_G(x_{i_0,1},\dots,x_{i_0,r_{i_0}})$ for some $i_0\in [k]$. Then, we define the point $P'=(p'_{i,j})$ with support $\varepsilon'=\varepsilon \cup \{(i_0,1)\}$ in such a way that $p'_{i,j}=p_{i,j}$ for $(i,j)\in \varepsilon$ and $p'_{i_0,1}=1$. Then, clearly, $\mathfrak{m}(\widehat{\varepsilon'})\subset \mathfrak{m}(\hat{\varepsilon})$ and $x_{i_0,1}\in \mathfrak{m}(\hat{\varepsilon})\setminus \mathfrak{m}(\widehat{\varepsilon'})$. Since $(i_0,j)\notin \varepsilon$, for all $j\in {r_{i_0}}$, it follows that $L_{\bb(\varepsilon')}=L_{\bb(\varepsilon)}\times \{0\}$ and $\kay_p'(\m,0)=\kay_p(\m)$ thus $I_{\kay_p',L_{\bb(\varepsilon')}}=I_{\kay_p,L_{\bb(\varepsilon)}}$. 

By Proposition \ref{p:IdealPoint}, we have $I([P])=\mathfrak{m} (\check{\varepsilon})+S\cdot I_{\kay_p,L_{\bb(\varepsilon)}}$. Therefore, $I([P'])\subset I([P])$. If we still have $[P']\in V_G(B)$, then the same procedure will give the chain $I([P''])\subset I([P'])\subset I([P])$ and continuing this way if necessary we end up with the desired point in $X$.
\end{proof}
\begin{example} Let $\beta=\begin{bmatrix}
1 & 1& 1& 0 & 0 &0 & 0 \\
0 & 0& 0& 1 & 1 &1 & 1 
\end{bmatrix}$ and $q=3$ so that  $\F=\F_3$ and $\K=\overline{\F}_3$. Our toric variety is $X=\Pp^2 \times \Pp^3 $ and its Cox ring is $S=\F[x_1,\dots,x_7]$ graded via:
$$
\begin{array}{llll}
\deg_{\beta}(x_1)=\deg_{\beta}(x_2)=\deg_{\beta}(x_3)=(1,0);\\ \deg_{\beta}(x_4)=\deg_{\beta}(x_5)=\deg_{\beta}(x_6)=\deg_{\beta}(x_7)=(0,1).
\end{array}
$$
We compute a generating set for the vanishing ideal $I(X(\F_3))=I(\A^7_G(\F_3))$ with the following commands:
\begin{verbatim}
i1 : q=3; F = GF(q,Variable => a); 
beta = matrix {{1,1,1,0,0,0,0},{0,0,0,1,1,1,1}}; 
i2 : r=numColumns beta; d=numRows beta;
i3 : R=F[x_1..x_r,y_1..y_r,z_1..z_d];
i4 : f1=y_1,f2=y_2,f3=y_3,f4=y_4,f5=y_5,f6=y_6,f7=y_7;
i5 : J=ideal(x_1-f1*(z_1),x_2-f2*(z_1),x_3-f3*(z_1),x_4-f4*(z_2),
x_5-f5*(z_2),x_6-f6*(z_2),x_7-f7*(z_2),y_1^q-y_1,y_2^q-y_2,
y_3^q-y_3,y_4^q-y_4,y_5^q-y_5,y_6^q-y_6,y_7^q-y_7)
i6 : IAG=eliminate (J,for i from r to d+2*r-1 list R_i)
\end{verbatim}
The final output $\verb|IAG|$ is the required ideal:
$$
\begin{array}{lll}
I(\A^7_G)=\langle {x}_{6}^{3}{x}_{7}-{x}_{6}{x}_{7}^{3},{x}_{5}^{3}{x}_{7}-{x}_{5}{x}_{7}^{3},{x}_{4}^{3}{x}_{7}-{x}_{4}{x}_{7}^{3},{x}_{5}^{3}{x}_{6}-{x}_{5}{x}_{6}^{3},\\ 
{x}_{4}^{3}{x}_{6}-{x}_{4}{x}_{6}^{3},{x}_{4}^{3}{x}_{5}-{x}_{4}{x}_{5
      }^{3},{x}_{2}^{3}{x}_{3}-{x}_{2}{x}_{3}^{3},{x}_{1}^{3}{x}_{3}-{x}_{1}{x}_{3}^{3},{x}_{1}^{3}{x}_{2}-{x}_{1}{x}_{2
      }^{3}\rangle.
\end{array}
$$
\end{example}

\subsection{A combinatorial method to compute the dimension} \label{s:dimension}
In this section we assume $X=X_{\Sigma}$ is a simplicial complete (not necessarily projective) toric variety. Let $D=\sum_{i=1}^r a_iD_i$ be an \textit{ample} divisor on $X$ of degree $\aa=\sum_{i=1}^r a_i\bb_i $, where $D_i=V(x_i)$. Then, the polytope $$P_D=\{\uu \in \Z^n : \la \uu, \vv_i \ra \geq -a_i , \forall i \in [r]\}$$  is \textit{ample}, that is, its normal fan is $\Sigma$. So, $P_D$ is also a full dimensional lattice polytope having a unique facet representation 
\[\displaystyle P_D=\bigcap_{i=1}^r H^{+}_{i,D}, \text{ where } H^{+}_{i,D}=\{\uu \in \Z^n : \la \uu, \vv_i \ra \geq -a_i  \}
\]
with a supporting hyperplane $H_{i,D}=\{\uu \in \Z^n : \la \uu, \vv_i \ra + a_i=0  \}$. The facets of $P_D$ are given by $F_{i,D}=\{\uu \in P_D : \la \uu, \vv_i \ra + a_i=0  \}$ for $i \in [r]$.

Proper faces $Q_D$ of $P_D$ are the intersection of facets containing it, i.e. 
 \begin{equation} \label{eq:faces}
Q_D= \bigcap_{Q_{D} \subseteq F_{i,D}} F_{i,D}=  \bigcap_{i\in \vep^c } F_{i,D} \text{ for } \vep^c:=[r]\setminus \vep= \{ i\in [r] : Q_{D} \subseteq F_{i,D}\}.
 \end{equation}
Therefore, there is a bijection between the faces $Q_{D}$ of $P_D$ and the complements $\vep$ of the subsets $\{ i\in [r] : Q_{D} \subseteq F_{i,D}\}$, and $P_D$ correspond to $\vep=[r]$. 

Recall that faces $Q$ of a polytope $P$ are denoted by $Q\prec P$ and its interior consists of points not lying on any of its proper faces, i.e.

\begin{equation*}\label{eq:interior}
P^{\circ}=P \setminus \bigcup_{\substack{Q\prec P \\Q \neq P}} Q.
\end{equation*}

\begin{definition}\cite[Definition 3.4]{NardiProjToric} An equivalence relation $\sim_{P}$ on the set of lattice points $P\cap \Z^{n}$ is defined by
\[\uu \sim_{P} \uu' \iff \exists Q\prec P \mbox{ such that } \uu,\uu' \in Q^{\circ} \mbox{ and } \uu-\uu'^{} \in (q-1)\Z^{n}\] where $Q^{\circ}$ is the interior of $Q$.
A \textbf{projective reduction} $\red{P}$ of $P$ is defined to be a set of representatives of elements of $P\cap \Z^{N}$ modulo $\sim_{P}.$
\end{definition}
There is a well known $1-1$ correspondence between the lattice points of $P_D$ and a basis of the vector space $S_{\aa}$, via 
\begin{equation*}
    \displaystyle \uu\in P_D \cap \Z^n \rightarrow \kay^{\la \uu,P_D\ra}=\x^{\m}=\prod_{i=1}^r x_i^{\la \uu, \vv_i \ra + a_i}\in S_{\aa}, \text{ where } m_i=\la \uu, \vv_i \ra + a_i.
\end{equation*}
We use the following in the sequel.
\begin{lemma}
If $\aa$ is an ample degree then $I_{\aa}(X(\F_q))=I_{\aa}(\A^r_G({\F_q}))$.   
\end{lemma}
\begin{proof}
As $X(\F_q)\subseteq \A^r_G({\F_q})$, we need only to prove that $I_{\aa}(X(\F_q))\subseteq I_{\aa}(\A^r_G({\F_q}))$. This will be done once we prove that $S_{\aa} \subset B$, since in that case $F\in I_{\aa}(X(\F_q)) \subset S_{\aa}$ will be an element of $B$ vanishing also on $V_G(B)=\A^r_G\setminus X$. 

If $\uu\in P^{\circ}_D\cap \Z^n$, then $\x^{\hat{\sigma}}$ divides $x_1\cdots x_r$ which divides $\kay^{\la \uu,P_D\ra}$ for any $\sigma \in \Sigma_{P_D}$ implying that $\kay^{\la \uu,P_D\ra}\in B$. If $\uu\in Q_D\cap \Z^n$ for a proper face $Q_D$, then there is a cone $\sigma\in \Sigma_{P_D}$ spanned by the inner normal vectors $\vv_{i_1},\dots,\vv_{i_k}$ of $Q_D$ as $P_D$ is ample and $\la \uu, \vv_i \ra + a_i=0 \iff i \in \{i_1,\dots,i_k\}$. Thus, $\x^{\hat{\sigma}}$ divides $\kay^{\la \uu,P_D\ra}$ implying that $\kay^{\la \uu,P_D\ra}\in B$.    
\end{proof}
Next, we give an algebraic proof for \cite[Theorem 3.5]{NardiProjToric} which is a very useful combinatorial method for computing the dimension of the code obtained from $X(\F_q)$.
\begin{theorem}\label{t:dimension}
If $\aa$ is ample, a basis for the code $C_{\aa,Y}$ on $Y=X(\F_q)$ is given by the images, under the evaluation map $\ev_Y$, of monomials $\kay^{\la \uu,P_D\ra}$ where $\uu\in \red(P_D)$. Therefore $K=\dim_{\F_q} C_{\aa,Y}=|\red(P_D)|$. 
\end{theorem}
\begin{proof} We show that  $H_Y({\aa})=|\red_{\succ}(P_D)|$ for the projective reduction $\red_{\succ}{P}$ whose elements correspond to monomials that are the biggest with respect to a term order $\succ$. Indeed, this will follow from the assertion that $I_{\aa}(Y)=I_{\aa}(\A^r_G({\F_q}))$ and
 \begin{equation}
     \kay^{\la \uu',P_D\ra} -\kay^{\la \uu{''},P_D\ra} \in I_{\aa}(Y)  \iff \uu{'} \sim_{P_D} \uu{''},
 \end{equation}
 since the ideal $I(A^r_G({\F_q}))$ is binomial. 
 
 Before going further let us set $\supp( \kay^{\la \uu,P_D\ra}):=\{i\in [r]: \la \uu, \vv_i \ra + a_i >0\}$. If $\vep= \supp( \kay^{\la \uu{'},P_D\ra})\cap \supp( \kay^{\la \uu{''},P_D\ra})$, then we have 
 \begin{equation} \label{eq:binomial}
 \displaystyle \kay^{\la \uu{'},P_D\ra} -\kay^{\la \uu{''},P_D\ra}=\x^{\m^{'}}-\x^{\m^{''}}=\prod_{i\in \vep} x_i^{\la \uu^{}, \vv_i \ra + a_i}  (\x^{\m^+}-\x^{\m^-})  
 \end{equation}
 where $\m^{+},\m^- \in \N^r$ satisfying $\m^{+}-\m^{-}=\m=\m^{'}-\m^{''}\in \Z^r$. 
 
 Now, if $ \kay^{\la \uu',P_D\ra} -\kay^{\la \uu{''},P_D\ra} \in I_{\aa}(Y) =I_{\aa}(\A^r_G(\F_q))$, then by the proof of Theorem \ref{t:idealAffine}, it follows that $\mbox{supp}(\x^{\m^+})\cup \mbox{supp}(\x^{\m^-})\subseteq\varepsilon$ yielding $ \kay^{\la \uu',P_D\ra} -\kay^{\la \uu{''},P_D\ra} \in I_{\aa}(\A^r_G(\vep)) $. Hence, by Theorem \ref{t:idealOrbit}, we get $\m^{+}-\m^{-}\in (q-1)L_{\bb}(\vep)$ and $\uu^{'},\uu^{''} \in Q^{\circ}_D$, for the face $\displaystyle Q_D=  \bigcap_{i\in \vep^c } F_{i,D}$ of $P_D$ described in \eqref{eq:faces} corresponding to $\vep$. As we clearly have $ \uu^{'}-\uu^{''} \in (q-1)\Z^n$, it follows that $\uu{'} \sim_{P_D} \uu{''}$.

 Conversely, if $\uu{'} \sim_{P_D} \uu{''}$ then there is a face $Q_D$ of $P_D$ whose interior contains both $\uu^{'}$ and $\uu^{''}$ with $ \uu^{'}-\uu^{''} \in (q-1)\Z^n$. Again as in \eqref{eq:faces}, we write $\displaystyle Q_D=  \bigcap_{i\in \vep^c } F_{i,D}$ for $\vep^c=\{ i\in [r] : Q_{D} \subseteq F_{i,D}\}$. Observe now that if $\uu^{}\in Q^{\circ}_D$, no other face $F_{j,D}$ can contain $\uu^{}$ for any $j\in \vep$. Hence, $\la \uu^{}, \vv_j \ra + a_j >0$ or equivalently $x_j$ divides $\kay^{\la \uu{},P_D\ra}$ for any $j\in \vep$. Thus, it follows that $ \supp( \kay^{\la \uu{'},P_D\ra})= \supp( \kay^{\la \uu{''},P_D\ra})=\vep$. Notice that $\x^{\vep}$ divides both terms of the binomial in \eqref{eq:binomial} and $\x^{\m^+}-\x^{\m^-}\in I_{(q-1)L_{\bb(\vep)}}$. As in the proof of of Theorem \ref{t:idealAffine}, we also have that $\x^{\vep}(\x^{\m^+}-\x^{\m^-}) $ vanishes on $\A^r_G(\F_q)$. Therefore, $\kay^{\la \uu{'},P_D\ra} -\kay^{\la \uu{''},P_D\ra} \in I_{\aa}(\A^r_G(\vep)) $, completing the proof.
\end{proof}

\section*{Acknowledgements} The author would like to thank Jade Nardi for useful conversations especially on the last section.

\bibliographystyle{plain}      
\bibliography{SahinComputingVanishingIdealsForToricCodes}   
\end{document}